\newtheorem{theorem}{Theorem}[section]
\newtheorem{lemma}[theorem]{Lemma}
\newtheorem{proposition}[theorem]{Proposition}
\newtheorem{corollary}[theorem]{Corollary}
\newtheorem{definition}[theorem]{Definition}
\newtheorem{remark}[theorem]{Remark}
\newcommand{\inte}{\operatorname{int}}
\newcommand{\BZ}{{\mathbb Z}}
\newcommand{\Pn}{\Psi_{a_n, \ldots , a_1}}
\newcommand{\Pk}{\Psi_{a_{k}, \ldots , a_1}}
\newcommand{\Pko}{\Psi_{a_{k+1}, \ldots , a_1}}
\newcommand{\St}{\operatorname{Stir}}
\newcommand{\lk}{\operatorname{lk}}
\newcommand{\del}{\operatorname{del}}
\newcommand{\CR}{\mathcal{R}}
\begin{document}
\title{Simplicial Complexes of Triangular Ferrers Boards}
\author{Eric Clark and Matthew Zeckner}
\maketitle

\begin{abstract}
We study the simplicial complex that arises from non-attacking
rook placements on a subclass of Ferrers boards that have $a_i$ rows of length
$i$ where $a_i>0$ and $i\leq n$ for some positive integer $n$.
In particular, we will investigate enumerative properties
of their facets, their homotopy type, and homology.
\end{abstract}

\section{Introduction}

A chessboard complex is the collection of all non-attacking rook positions on an $m \times n$ chessboard.  It is clear that
this is a simplicial complex as the removal of one rook from an admissible rook placement yields another admissible rook placement.
Notice that a placement of $i+1$ rooks corresponds to a simplex of dimension $i$.

In this paper, we will be studying the topology of the simplicial complex that arises from non-attacking rook placements
on triangular boards.
The triangular board $\Pn$ is a left justified board with $a_i>0$
rows of length $i$ for $1\leq i \leq n$.  In other words, given a positive integer $n$, the triangular
board $\Pn$ is the Ferrers board associated with the partition
$\pi = (n^{a_n},\ldots,1^{a_1})$ with $a_i>0$;
see Figure~\ref{figure_tri_board}.  The squares of the triangular board will be labeled $(i,j)$ for $i\leq j$
where $i$ represents the columns (numbered left to right) while $j$ represents the
rows (labeled bottom to top).  Motivated by results obtained using the Macaulay2 software package found in~\cite{Cook},
we begin by showing that for $a_i$ large enough, $\Sigma(\Psi_{a_n,\ldots,a_1}$),
the simplicial complex associated with rook placements on $\Psi_{a_n,\ldots,a_1}$, is a pure complex
that is vertex decomposable.

Next, we study the other extremal case.  The Stirling complex $\St(n)$, originally
defined by Ehrenborg and Hetyei~\cite{EhrenborgHetyei}, is the simplicial
complex formed by rook placements on the board $\Psi_{1,1,\ldots,1}$ with
$n$ rows; see Figure~\ref{figure_rook_placement}.
It is known that the $f$-vector of $\St(n)$ is given by
$f_i = S(n+1, n+1-i)$ for $i= 1, \ldots, (n-1)$ where $S(n,i)$ denotes the Stirling
number of the second kind; see~\cite[Corollary 2.4.2]{StanleyVol1}.
This complex is not pure.  We begin the study of $\St(n)$ by enumerating its
facets via generating functions and then use Discrete Morse theory to study its topology.

Chessboard complexes first appeared
in the 1979 thesis of Garst~\cite{Garst} concerning
Tits coset complexes.
By setting $G = \mathfrak{S}_n$ and $G_i = \{\sigma | \sigma (i) = i \}$ for
$i = 1, \ldots , m  \le n$, Garst obtained the chessboard complex
$M_{m,n} = \Delta (G;G_1, \ldots ,G_m)$.
Here, $\Delta (G;G_1, \ldots ,G_m)$ is the simplicial complex whose vertices
are the cosets of the subgroups
and whose facets have the form $\{gG_1, \ldots , gG_m \}$, for $g \in G$.

The chessboard complex later appeared in a paper by
Bj\"orner, Lov\'asz, Vre\'cica, and \v{Z}ivaljevi\'c~\cite{Bjorner_et_al}
where they gave a bound on the connectivity of the chessboard complex and
conjectured that their bound was sharp.
This conjecture was shown to be true by Shareshian and Wachs~\cite{Shar_Wach}.
In that same paper, Shareshian and Wachs also showed that if
the chessboard met certain criteria, then it contained torsion in its homology.

\section{Topological Tools}

For an introduction to combinatorial topology, basic definitions, and results,
we refer the reader to the books by Jonsson~\cite{JonssonBook} and Kozlov~\cite{KozlovBook}.
\begin{definition}
\emph{For a family $\Delta$ of sets and a set $\sigma$ of $\Delta$, the \emph{link} $\lk_\Delta(\sigma$) is the family of all $\tau \in \Delta$ such that
$\tau \cap \sigma = \emptyset$, and $\tau \cup \sigma \in \Delta$.  The \emph{deletion} $\del_\Delta (\sigma)$ is the family
of all $\tau \in \Delta$ such that $\sigma \cap \tau = \emptyset$.}
\end{definition}

\begin{definition}
\emph{A simplicial complex $\Delta$ is \emph{vertex decomposable} if
\begin{enumerate}
\item
Every simplex (including $\emptyset$ and $\{\emptyset\}$) is vertex decomposable.
\item
$\Delta$ is pure and contains a $0$-cell $x$ -- a \emph{shedding vertex} -- such that $\del_\Delta(x)$ and $\lk_\Delta(x)$
are both vertex decomposable.
\end{enumerate}}
\end{definition}

Showing that a simplicial complex is vertex decomposable is useful in determining
the topology of the complex as can be seen in the following theorem.

\begin{theorem}{\cite[Theorems 3.33 and 3.35]{JonssonBook}}
Let $\Delta$ be a simplicial complex of dimension $d$.  If the complex $\Delta$ is vertex
decomposable, then $\Delta$ is homotopy equivalent to a wedge of spheres of
dimension $d$.
Moreover, we have the following implications: \\
Vertex Decomposable $\Longrightarrow $
Shellable $\Longrightarrow$
Constructible $\Longrightarrow$
Homotopically Cohen-Macaulay
\label{thm_vd_implies_wedge}
\end{theorem}


We recall the following definitions and theorems from
discrete Morse theory.  See~\cite{Forman,Forman_2,KozlovBook}
for further details.

\begin{definition}
\emph{A \emph{partial matching} in a poset $P$ is a partial matching
in the underlying graph of the Hasse diagram of $P$, that is, a
subset $M\subseteq P\times P$ such that $(x,y)\in M$ implies
$x\prec y$ and each $x\in P$ belongs to at most one element
of $M$.  For $(x,y)\in M$ we write $x=d(y)$ and $y=u(x)$,
where $d$ and $u$ stand for down and up, respectively.}
\end{definition}

\begin{definition}
\emph{A partial matching $M$ on $P$ is \emph{acyclic} if there does not exist a
cycle
$$
z_1 \succ d(z_1) \prec z_2 \succ d(z_2) \prec \cdots \prec z_n \succ
d(z_n) \prec z_1,
$$
in $P$ with $n\geq 2$, and all $z_i\in P$ distinct.  Given a partial
matching, the unmatched
elements are called \emph{critical}.  If there are no
critical elements,  the acyclic matching is \emph{perfect}.}
\label{def_acyclic}
\end{definition}

We now state the main result from discrete Morse theory.  For a
simplicial complex $\Delta$, let
$\mathcal{F}(\Delta)$ denote the
poset of faces of $\Delta$ ordered by inclusion.

\begin{theorem}
Let $\Delta$ be a simplicial complex and let $M$ be an acyclic matching on the face poset of$~\Delta$.
Let $c_i$ denote the number of critical $i$-dimensional cells of $\Delta$.
The space $\Delta$ is homotopy equivalent to a cell complex $\Delta_c$ with $c_i$ cells of dimension $i$
for each $i\ge 0$, plus a single $0$-dimensional cell in the case where the empty set is paired in the matching. \label{cor_morse}
\end{theorem}

\begin{remark}
If the critical cells of an acyclic matching on $\Delta$ form a subcomplex $\Gamma$ of $\Delta$, then $\Delta$ simplicially collapses to $\Gamma$, implying that $\Gamma$ is a deformation retract of $\Delta$.
\end{remark}

It will be convenient for us to make use of reduced discrete Morse theory;
that is, we will include the empty set.
In particular, if the matching in Theorem~\ref{cor_morse}
is perfect, then $\Delta_c$ is contractible.  Also, if the
matching has exactly one critical cell then $\Delta_c$ is
a $d$-sphere where $d$ is
the dimension of this cell.

Given a set of critical cells of differing dimension, in general
it is impossible to conclude that the CW complex~$\Delta_c$ is
homotopy equivalent to a wedge of spheres.  See
Kozlov~\cite{Kozlov_suff_cond} for a non-trivial example.
However, when some critical cells are facets, it may be possible
to say more as seen in the following theorem.

\begin{theorem}
Let $M$ be a Morse matching on $\mathcal{F}(\Delta)$
with $k_i$ critical cells of dimension $i$.
Assume for some positive integer $j$ that there are no critical cells of dimension less than $j$ and
that all critical cells of dimension $j$ are facets.
Then the complex $\Delta$ is homotopy equivalent to the wedge
\[
X \vee \bigvee_{k_j}{\mathbb{S}^j},
\]
where $X$ is a CW complex consisting of a $0$-cell and cells of dimension greater than $j$.
\label{thm_maximal_cells}
\end{theorem}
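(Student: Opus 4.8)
The plan is to describe the homotopy type of $\Delta$ through the Morse CW complex furnished by Corollary~\ref{cor_morse}, and then to peel off the $j$-spheres using the facet hypothesis. First I would invoke Corollary~\ref{cor_morse} to replace $\Delta$ by a homotopy equivalent CW complex $\Delta_k$ whose cells are indexed by the critical cells of $M$, together with one extra $0$-cell serving as a base point. Since $M$ has no critical cells in any dimension below $j$, for $j\ge 1$ this forces $k_0=0$, so $\Delta_k$ has a single $0$-cell, no cells at all in dimensions $1,\ldots,j-1$, exactly $k_j$ cells in dimension $j$, and $k_i$ cells in each dimension $i>j$. In particular the $(j-1)$-skeleton of $\Delta_k$ is the single base point, so every $j$-cell is attached along the constant map $\mathbb{S}^{j-1}\to\{pt\}$. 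Hence the $j$-skeleton is exactly the wedge $W:=(\mathbb{S}^j)^{k_j}$, and $X$ will be the subcomplex (the base point together with all cells of dimension $>j$) that remains.

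The heart of the argument is to show that $W$ splits off as a wedge summand, and this is where the hypothesis that every critical $j$-cell is a facet enters. I would use the gradient-path description of the attaching maps in Forman's construction of $\Delta_k$: the way a higher critical cell wraps around a critical $j$-cell $\sigma$ is governed by the gradient paths terminating at $\sigma$. A gradient path reaching a $j$-dimensional cell must, at its final step, present that cell as a codimension-one face of some $(j+1)$-simplex appearing along the path. But if $\sigma$ is a facet of $\Delta$ it is a maximal face, hence not a proper face of any simplex, and in particular not a face of any $(j+1)$-simplex. Consequently no gradient path terminates at any critical $j$-cell, and the attaching map of every cell of dimension $>j$ may be homotoped off the open $j$-cells into the $(j-1)$-skeleton, which is the base point.

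With the attaching maps pushed off $W$, the $j$-spheres meet the rest of $\Delta_k$ only at the base point, so I would conclude that $\Delta_k$ is homotopy equivalent to $W\vee X=(\mathbb{S}^j)^{k_j}\vee X$, where $X$ is the CW complex with one $0$-cell and $k_i$ cells of dimension $i$ for $i>j$. Combining this with the homotopy equivalence $\Delta\simeq\Delta_k$ from Corollary~\ref{cor_morse} finishes the proof.

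I expect the main obstacle to be upgrading the gradient-path vanishing from a statement about the cellular boundary map $\partial:C_{j+1}\to C_j$ (which it immediately makes zero) to a genuine homotopy-level splitting, that is, verifying that the attaching maps of \emph{all} higher cells, not merely the $(j+1)$-cells, can simultaneously be homotoped off the open $j$-cells; this likely proceeds by peeling one dimension at a time, keeping $W$ a wedge summand at each stage. The point is most delicate when $j=1$, since there $\pi_1(W)$ is a free group and vanishing of the integral boundary map alone would only place the attaching class in the commutator subgroup. It is the \emph{absence} of gradient paths, rather than merely the vanishing of their signed count, that rules this out, so I would be careful to argue at the level of the gradient-path-defined attaching maps rather than through homology.
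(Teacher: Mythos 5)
Your strategy---pass to the Morse complex $\Delta_k$ of the full matching via Corollary~\ref{cor_morse} and then split the $j$-spheres off inside $\Delta_k$---has a genuine gap at precisely the step you flag as ``the main obstacle,'' and that obstacle is not a deferrable technicality: it is the entire content of the theorem. Corollary~\ref{cor_morse} is a black box: it gives the number of cells of $\Delta_k$ in each dimension and nothing about their attaching maps. Your assertion that ``the way a higher critical cell wraps around a critical $j$-cell is governed by the gradient paths terminating at it'' is, in the machinery the paper has available, only a chain-level fact: signed counts of gradient paths compute the cellular boundary map between \emph{consecutive} dimensions. Promoting this to a homotopy-level statement that the attaching maps of \emph{all} cells of dimension $>j$ in $\Delta_k$ can be pushed off the open $j$-cells is unproved; for cells of dimension $\ge j+2$ there is not even a gradient-path description of the attaching map to invoke, and your proposed repair (``peeling one dimension at a time'') is not carried out. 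This is exactly the difficulty the paper warns about just before the theorem, citing Kozlov~\cite{Kozlov_suff_cond}: critical-cell data alone does not determine the homotopy type of $\Delta_k$, so any argument conducted entirely inside $\Delta_k$ must import a flow-level description of the Morse complex that you neither state nor verify. Note also that inside $\Delta_k$ the hypothesis ``the critical $j$-cells are facets'' has no intrinsic meaning---being a facet is a property of faces of the simplicial complex $\Delta$---so an argument that never leaves $\Delta_k$ is structurally handicapped in exploiting it.

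The paper's proof avoids all of this by using the facet hypothesis \emph{before} applying Morse theory, in $\Delta$ itself rather than in $\Delta_k$. Since the critical $j$-cells are maximal faces, removing their open cells leaves a subcomplex $\Delta'$ on which $M$ restricts to an acyclic matching whose critical cells all have dimension $>j$; Corollary~\ref{cor_morse} then gives $\Delta'\simeq X$, where $X$ has one $0$-cell and cells only in dimensions $>j$, so $X$, and hence $\Delta'$, is $j$-connected. Each removed facet is attached to $\Delta'$ along its boundary sphere $\mathbb{S}^{j-1}\to\Delta'$, which is null-homotopic by connectivity, and attaching a cell along a null-homotopic map creates a wedge summand $\mathbb{S}^j$; because the facets are maximal, their boundaries lie in $\Delta'$ and these attachments are independent of one another, yielding $\Delta\simeq X\vee(\mathbb{S}^j)^{k_j}$. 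If you want to salvage your write-up, this is the fix: apply discrete Morse theory only to the subcomplex obtained by deleting the critical facets, and glue those facets back at the end, rather than trying to control attaching maps in the Morse complex of all of $\Delta$.
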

\begin{proof}
Since there are no critical cells of dimension less than $j$, the
$(j-1)$-skeleton of $\Delta$ is contained in a collapsible sub-complex $\Delta'$ of
$\Delta$.  
As such, the quotient $\Delta / \Delta'$ is homotopy equivalent to $\Delta$
and we may obtain the CW complex $\Delta / \Delta'$ by attaching $k_j$ cells of dimension $j$ to a $0$-cell and
adding all other cells of dimension greater than $j$ to the same $0$-cell.

\end{proof}

Kozlov~\cite{Kozlov_suff_cond} gives a more general sufficient
condition on an acyclic Morse matching for the complex to be
homotopy equivalent to a wedge of spheres enumerated by the critical cells.

It is often useful to create acyclic partial matchings on several different sections of the face poset of a simplicial complex and then combine
them to form a larger acyclic partial matching on the entire poset.
This process is detailed in the following theorem known as the \textit{Cluster lemma} in \cite{JonssonBook} and as the \textit{Patchwork theorem} in \cite{KozlovBook}.

\begin{theorem}\label{patchwork}
Assume that $\varphi : P \rightarrow Q$ is an order-preserving map.
For any collection of acyclic matchings on the subposets $\varphi^{-1}(q)$ for $q\in Q$, the union of these matchings is itself an acyclic matching on $P$.
\end{theorem}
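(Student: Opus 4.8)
The plan is to establish two things: that the union $M = \bigcup_{q \in Q} M_q$ is a legitimate partial matching on $P$, and that it is acyclic. The first point is essentially bookkeeping. Since $\varphi$ is a function, the fibers $\varphi^{-1}(q)$ partition $P$, so the supports of the individual matchings $M_q$ are pairwise disjoint; hence no element of $P$ is matched twice and $M$ satisfies the ``at most one element of $M$'' condition. Moreover, if $(x,y) \in M_q$ then $x \prec y$ holds within $\varphi^{-1}(q)$, which is a subposet of $P$, so $x \prec y$ in $P$ as well. Thus $M$ is a partial matching.

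The substance of the argument is acyclicity, and the key idea is to push a hypothetical cycle forward through $\varphi$. Suppose, for contradiction, that $M$ contains a cycle
$$
z_1 \succ d(z_1) \prec z_2 \succ d(z_2) \prec \cdots \prec z_n \succ d(z_n) \prec z_1
$$
as in Definition~\ref{def_acyclic}, with $n \geq 2$ and all $z_i$ distinct. Each matched pair $(d(z_i), z_i)$ lies in a single fiber, so $\varphi(d(z_i)) = \varphi(z_i)$. Because $\varphi$ is order-preserving and $d(z_i) \prec z_{i+1}$ (indices read cyclically), we obtain $\varphi(z_i) = \varphi(d(z_i)) \leq \varphi(z_{i+1})$ for every $i$. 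Chaining these inequalities around the cycle yields
$$
\varphi(z_1) \leq \varphi(z_2) \leq \cdots \leq \varphi(z_n) \leq \varphi(z_1),
$$
which forces all of the values $\varphi(z_i)$ to coincide with a common element $q \in Q$.

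Once all of the $\varphi(z_i)$ are equal, each $z_i$, and hence each matched partner $d(z_i)$, lies in the single fiber $\varphi^{-1}(q)$. But then the entire cycle is a cycle in the matching $M_q$ on $\varphi^{-1}(q)$, contradicting the assumed acyclicity of $M_q$. Therefore no such cycle can exist, and $M$ is acyclic.

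I expect the only delicate point to be the passage from the weak inequalities $\varphi(z_i) \leq \varphi(z_{i+1})$ to the conclusion that all of the values are equal: this step relies on the antisymmetry of the order on $Q$ together with the cyclic closure of the chain, and it is exactly where the order-preserving hypothesis is used. After that, the reduction to a single fiber and the appeal to the acyclicity of $M_q$ are immediate, so I anticipate no serious obstacle beyond correctly setting up the forward image of the cycle.
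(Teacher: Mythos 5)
The paper itself gives no proof of this statement: it is the Patchwork Theorem, quoted directly from Kozlov's book, so there is no in-paper argument to compare yours against. Judged on its own, your proof follows the standard route and its main line is correct: matched pairs lie in single fibers, so $\varphi(d(z_i))=\varphi(z_i)$; order-preservation applied to the unmatched cover relations gives $\varphi(z_i)\leq\varphi(z_{i+1})$ cyclically; antisymmetry forces all values to equal a single $q$; and the hypothetical cycle is then a cycle for $M_q$ inside $\varphi^{-1}(q)$, contradicting the assumed acyclicity of that fiber matching.

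One step, however, is justified by a claim that is false in general, even though your conclusion is true. By the paper's definition, a partial matching lives on the Hasse diagram of $P$, so $(x,y)\in M$ requires that $y$ \emph{cover} $x$ in $P$. You write that $(x,y)\in M_q$ gives $x\prec y$ in $P$ ``since $\varphi^{-1}(q)$ is a subposet of $P$,'' but a cover relation in a subposet need not be a cover relation in the ambient poset: in the chain $a<z<b$, the pair $a\prec b$ is a cover in the subposet $\{a,b\}$ yet not in the chain itself. What rescues the claim is exactly the order-preserving hypothesis: if $x<z<y$ in $P$ with $\varphi(x)=\varphi(y)=q$, then $q\leq\varphi(z)\leq q$, so $z\in\varphi^{-1}(q)$, contradicting that $y$ covers $x$ in the fiber. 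A companion observation (this direction is automatic, since a subposet has fewer elements to interpose) is needed at the end of your argument: the cover relations of $P$ appearing in the trapped cycle restrict to cover relations of $\varphi^{-1}(q)$, so that the cycle really is a cycle of the fiber poset in the sense of Definition~\ref{def_acyclic} and the acyclicity of $M_q$ applies. With those one-line repairs your proof is complete.
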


\section{Triangular Boards}

Let $\Sigma(\Pn)$ denote the simplicial complex formed by
all non-attacking rook placements on the triangular board $\Pn$.
We call a rook placement \emph{maximal} if no other rook
can be added to the placement, that is, every square
on the board is attacked.

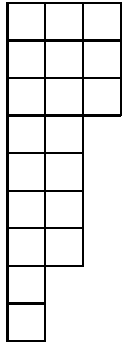
\begin{figure}

\begin{center}
\setlength{\unitlength}{5mm}
\begin{picture}(3,9)
\put(0,0){\line(0,1){9}}
\put(1,0){\line(0,1){9}}
\put(2,2){\line(0,1){7}}
\put(3,6){\line(0,1){3}}

\put(0,0){\line(1,0){1}}
\put(0,1){\line(1,0){1}}
\put(0,2){\line(1,0){2}}
\put(0,3){\line(1,0){2}}
\put(0,4){\line(1,0){2}}
\put(0,5){\line(1,0){2}}
\put(0,6){\line(1,0){3}}
\put(0,7){\line(1,0){3}}
\put(0,8){\line(1,0){3}}
\put(0,9){\line(1,0){3}}
\end{picture}
\end{center}
\caption{The Ferrers board $\Psi_{3,4,2}$.}
\label{figure_tri_board}
\end{figure}

\begin{theorem}
Let $a_i \ge i$ for all
$i=1, \ldots, n$. Then the simplicial complex
$\Sigma(\Pn)$ is vertex decomposable.
\label{Mthm}
\end{theorem}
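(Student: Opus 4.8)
The plan is to induct on the number of cells of the board, reducing the theorem to two assertions: that $\Sigma(\Pn)$ is pure, and that it carries a shedding vertex whose link and deletion again satisfy the hypotheses. Purity is the easy half. Column $c$ of $\Pn$ meets exactly $h_c=\sum_{k\ge c}a_k$ rows, and these numbers are weakly decreasing in $c$ with $h_n=a_n\ge n$, so $h_c\ge n$ for every $c$. If a maximal placement left some column $c$ empty, then every one of the $h_c\ge n$ rows meeting column $c$ would have to carry a rook in another column, since otherwise its cell in column $c$ is unattacked; this forces at least $n$ rooks. But an empty column leaves at most $n-1$ columns free to receive rooks, so a placement omitting a column has at most $n-1$ rooks, a contradiction. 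Hence every maximal placement meets all $n$ columns and uses exactly $n$ rooks, so $\Sigma(\Pn)$ is pure of dimension $n-1$.

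For the shedding step I would pick the vertex according to where the hypotheses have slack. If $a_n\ge n+1$, take $x$ to be the rightmost cell $(n,r)$ of a row $r$ of length $n$. Shortening $r$ shows $\del(x)=\Sigma(\Psi_{a_n-1,\,a_{n-1}+1,\,a_{n-2},\ldots,a_1})$, which still satisfies the hypotheses (the requirement $a_n-1\ge n$ is exactly why the slack is needed), while deleting column $n$ and row $r$ gives $\lk(x)=\Sigma(\Psi_{a_{n-1}+a_n-1,\,a_{n-2},\ldots,a_1})$ on $n-1$ columns, again satisfying the hypotheses. Symmetrically, if $a_1\ge 2$, take $x=(1,r)$ with $r$ a row of length $1$; then $\del(x)=\Sigma(\Psi_{a_n,\ldots,a_2,\,a_1-1})$ and $\lk(x)=\Sigma(\Psi_{a_n,\ldots,a_2})$ are triangular boards meeting the hypotheses on $n$ and $n-1$ columns. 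In both cases each of $\del(x)$ and $\lk(x)$ has fewer cells, so the induction closes.

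The one situation this does not reach is the doubly tight case $a_1=1$ and $a_n=n$, where neither corner above can be shortened without violating $a_1\ge1$ or $a_n\ge n$; equivalently, purity is lost, and indeed the board $\Psi_{2,1}$ already shows that no corner removal leaves the complex pure and vertex decomposable. Here I would instead shed the first-column cell $x=(1,r)$ of a row $r$ of length $n$. The link is still clean: deleting column $1$ and row $r$ relabels every surviving row to a prefix and yields $\lk(x)=\Sigma(\Psi_{a_n-1,\,a_{n-1},\ldots,a_2})$ on $n-1$ columns, which satisfies the hypotheses. The deletion, however, removes only the leftmost cell of $r$, so $r$ now occupies columns $2,\ldots,n$ and $\del(x)$ is no longer the rook complex of a Ferrers board.

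The main obstacle is therefore to enlarge the inductive class to boards whose rows occupy contiguous intervals of columns, to re-establish purity there through the matching criterion that no maximal placement can vacate a column, and to show this class is closed under shedding a leftmost or rightmost cell. The delicate point is that deleting a first-column cell lowers one column height, so one must check that purity survives each deletion—note that the genuine condition here is the matching criterion, which is strictly weaker than $h_c\ge n$, as a board with a single length-$1$ row already shows—and that a legal shedding cell remains available until a link drops the column count. In particular one must verify that the recursion never lands on a degenerate board such as a full square $M_{n,n}$, which fails to be vertex decomposable already for $n=2$. Granting this generalization, induction on the number of cells settles the doubly tight case and hence the theorem; Theorem~\ref{thm_vd_implies_wedge} then identifies $\Sigma(\Pn)$ up to homotopy with a wedge of $(n-1)$-spheres.
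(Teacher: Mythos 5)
Your purity argument and your two slack cases ($a_n\ge n+1$: shed the rightmost cell of a longest row; $a_1\ge 2$: shed a length-one row) are correct, but the proposal has a genuine gap precisely where the theorem is hardest: the tight case. When $a_1=1$ and $a_n=n$ --- and worse, when the middle inequalities are tight as well, e.g.\ the board with $a_n=n$, $a_i=i-1$ for $2\le i\le n-1$, $a_1=1$ --- removing the end cell of a row of length $\ell$ produces a board violating the hypotheses (it needs $a_\ell\ge\ell$, $a_1\ge2$, or $a_n\ge n+1$), and such boards can genuinely fail purity, so no corner-shedding keeps you inside your inductive class. You recognize this and switch to shedding a first-column cell of a longest row, whose link is fine but whose deletion is not a Ferrers-board complex; at that point you declare the necessary enlargement of the inductive class (interval boards, purity for them, closure under shedding, avoiding degenerate boards like $M_{n,n}$) to be "the main obstacle" and grant it rather than prove it. That granted generalization \emph{is} the theorem in the tight case, so the proof is incomplete.

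The paper closes exactly this gap with an idea your sketch is missing: the deletion never needs to lie in the inductive class, because you never apply the induction hypothesis to a deletion --- only to links. The paper sheds the first column from the top down: letting $V_j$ be the top $j$ cells of column $1$, Lemma~\ref{pure} shows directly that every intermediate complex $\del_{\Sigma(\Pn)}(V_j)$ is pure of dimension $n-1$ (these are your non-Ferrers "interval" boards, but purity is the only property ever required of them, and it is proved by the rook-covering argument, not by induction). Meanwhile every link encountered along the way deletes column $1$ together with one full row, hence is an honest triangular board on $n-1$ columns satisfying the hypotheses, so the induction on $n$ applies to it (Lemma~\ref{ai} handles the shifted conditions $a_i\ge i$). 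The chain of deletions terminates at the cell $(1,1)$, where link and deletion both equal $\Sigma(\Psi_{a_n,\ldots,a_2})$, and vertex decomposability then propagates back up the chain. If you replace your final paragraph with this column-peeling argument, your corner cases become unnecessary: the peeling works uniformly under the stated hypotheses, including the doubly tight case such as $\Psi_{2,1}$.
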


This theorem does not extend further in general.
From a cursory glance at complexes $\Sigma(\Pn)$,
where $a_i \ge i-1$ for $i = 1, \ldots , n$ we allow for complexes such
as  $\Sigma(\Psi_{2, 1, 0})$ which is not pure.  In addition, loosening our conditions to allow $a_i \geq i-2$
for $i=1, \ldots, n-1$ and $a_n \geq n$ allows complexes such as
$\Sigma(\Psi_{4,0,0})$ which is a torus~\cite{Bjorner_et_al}.

In order to prove Theorem~\ref{Mthm}, we need the following lemma.
Recall that the squares of the first column of $\Pn$ are labeled
$(1,1), \ldots, (1,p)$ where $\displaystyle{p= \sum^n_{i=1} a_i}$.
Let $V_j$ denote a collection of the top $j$ squares of the first column,
that is, $V_j= \{(1, p-j+1), \ldots , (1,p)\}$ for $j=1, \ldots , p$ and let $V_0 = \emptyset$.

\begin{lemma}
Consider $\Pn$ with $a_i \ge i$ for all
$i=1, \ldots, n$.
Then for $j=0, \ldots, p-1$
the simplicial complex $\del_{\Sigma(\Pn)}(V_j)$ is pure
of dimension $n-1$.
\label{pure}
\end{lemma}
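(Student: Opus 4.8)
The plan is to prove purity through a single extension lemma: I would show that any non-attacking rook placement on $\Pn$ that avoids $V_j$ and uses fewer than $n$ rooks admits an additional rook lying on the board, off $V_j$, and attacking none of the placed rooks. Because the board has exactly $n$ columns, every placement uses at most $n$ rooks, so $\del_{\Sigma(\Pn)}(V_j)$ has dimension at most $n-1$; granting the extension property, every facet must then use exactly $n$ rooks, which is precisely purity in dimension $n-1$. (Taking $j=0$ also recovers purity of $\Sigma(\Pn)$ itself.)

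To carry this out I would fix a face $R$ with $|R| = k < n$, let $C$ be its set of occupied columns, and record the one structural fact I need: column $c$ has a square in exactly the rows of length at least $c$, and the number of such rows is $a_c + a_{c+1} + \cdots + a_n$, which is at least $a_n \geq n$ for every $c$. Since $|C| = k < n$, some column is free, and I would split into two cases according to whether a free column of index at least $2$ exists.

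If some free column $c \geq 2$ exists, then among the at least $n$ rows meeting column $c$ at most $k \leq n-1$ are occupied by $R$, so at least one such row $r$ is empty; the square $(c,r)$ is on the board, attacks nothing in $R$, and avoids $V_j$ since $c \neq 1$, so it extends $R$. Otherwise every column of index at least $2$ is occupied, forcing $C = \{2, \ldots, n\}$, $k = n-1$, and column $1$ free. Here every rook of $R$ lies in a column at least $2$, hence in a row of length at least $2$, so the $a_1 \geq 1$ bottom rows of length $1$ hold no rook; in particular row $1$ is empty, and $(1,1) \notin V_j = \{(1,p-j+1), \ldots, (1,p)\}$ because $j \leq p-1$ gives $p-j+1 \geq 2$. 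Thus $(1,1)$ extends $R$.

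I expect the only real obstacle to be this second case, where deleting $V_j$ strips the top $j$ squares from column $1$ and could in principle leave column $1$ with no admissible row once $j$ is large. The point that resolves it is that a rook in any column $\geq 2$ can never sit in a length-$1$ row, so the bottom row persists as a legal square for column $1$ across the whole range $0 \leq j \leq p-1$; this is exactly where the hypotheses $a_1 \geq 1$ and $a_n \geq n$ enter, the latter guaranteeing the count $a_c + \cdots + a_n \geq n$ used in the first case. With an extension produced in both cases, iterating from the empty placement yields a facet with $n$ rooks while no face of size less than $n$ is maximal, so every facet has dimension $n-1$ and $\del_{\Sigma(\Pn)}(V_j)$ is pure of dimension $n-1$.
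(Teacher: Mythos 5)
Your proof is correct and is essentially the paper's argument in contrapositive form: where the paper shows that a maximal placement must cover the top $a_n$ rows of the board (forcing, via $a_n \geq n$, one rook in every column, with the square $(1,1)$ handling column $1$ when $j \geq 1$), you show that any placement with fewer than $n$ rooks leaves an unattacked square --- the same count of at least $n$ available rows against at most $n-1$ rooks, and the same observation that $(1,1)$ survives the deletion of $V_j$ and cannot be attacked from columns $\geq 2$. The only organizational difference is that you split on whether a free column $c \geq 2$ exists rather than on $j=0$ versus $1 \leq j \leq p-1$, which lets you treat all values of $j$ uniformly.
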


\begin{proof}

We have two cases to consider.

Let $j=0$.  Then del$_{\Sigma(\Pn)}(V_0) = \Sigma(\Pn)$.
Any facet of $\Sigma(\Pn)$ comes from some maximal rook placement on $\Pn$.  Any maximal rook placement must cover
the rectangular board $n \times a_n$.  Since $a_n \geq n$, this requires exactly $n$ rooks, one in each of
the $n$ columns.  Since every column
contains a rook, the entire board $\Pn$ is covered.

Let $1 \le j \le p-1$.  Here, $\del_{\Sigma(\Pn)}(V_j)$ is the simplicial complex formed
by all non-attacking rook placements on the Ferrers board $\Pn$ where
the top $j$ squares in the first column have been deleted.  Any maximal rook placement on this board
must cover the $a_n \times n-1$ rectangular sub-board created by rows
$p-a_n+1,\ldots,p$ and columns $2,3,\ldots,n$.
Since $a_n \geq n-1$, this requires exactly $n-1$ rooks to cover, one in each of the
columns $2,3,\ldots,n$.  The first column will contain at least one square (namely
$(1,1)$) that is not covered by any of these $n-1$ rooks.
Thus by placing a rook in the first column, we see that any facet of
$\del_{\Sigma(\Pn)}(V_j)$ comes from a maximal rook placement utilizing $n$ rooks.
\end{proof}

\begin{proof}[Proof of Theorem \ref{Mthm}]

We will proceed by induction on $n$, the length of the largest row.

For $n=1$, we have a $1 \times a_1$ chessboard which yields a simplicial complex
that is clearly vertex decomposable.
Now assume $\Sigma(\Pk)$ is vertex decomposable and consider $\Sigma(\Pko)$.
We maintain our labeling of the vertices using $\displaystyle{p= \sum^{k+1}_{i=1} a_i}$.
We note that $\Sigma(\Pko)$ is pure by Lemma~\ref{pure}.

We claim that the vertex corresponding to the square $(1,p)$ is a shedding vertex of $\Sigma(\Pko)$.
First, $\lk_{\Sigma(\Pko)}(1,p)$ is the set of all faces in bijection with maximal rook placements on the
Ferrers board $\Pko$
where the $p$th row (i.e., top row) and the first column of $\Psi_{a_{k+1}, \ldots , a_1}$ have been deleted.
That is,
\[
\lk_{\Sigma(\Pko)}(1,p) = \Sigma(\Psi_{a_{k+1}-1,a_k,\ldots,a_2}).
\]
Note that the largest
row is now length $k$.  Since $a_i \ge i-1$ and $a_{k+1}-1 \ge k$ the link of $(1,p)$ is vertex
decomposable by our induction hypothesis.

We now must show that $\del_{\Sigma(\Pko)}(1,p)$ is vertex decomposable by showing that the vertex corresponding to the square
$(1,p-1)$ is a shedding vertex which begins a recursive process.

At the $j$th iteration, we need to show that $\Delta_j=\del_{\Sigma(\Pko)}(V_j)$ is vertex decomposable
by showing that the vertex corresponding to $(1,p-j)$ is a shedding vertex.  Suppose row $p-j$ has length $\ell$. First,
$\lk_{\Delta_j}(1,p-j)$ corresponds to the set of all rook placements on the
Ferrers board $\Psi_{a_{k+1}, \ldots , a_1}$
with row $p-j$ and the first column deleted.
This board has $a_{i}$ rows of length $i-1$ for $i=2, \ldots, \widehat{\ell}, \ldots , k+1$
and $a_{\ell}-1$ rows of length $\ell -1$.
From this we see that
\[
\lk_{\Delta_j}(1,p-j) = \Sigma(\Psi_{a_{k+1},\ldots,a_{\ell}-1,\ldots,a_2}).
\]
Once again as $a_i \ge i-1$ and $a_{\ell}-1 \ge \ell - 1$, the link $\lk_{\Delta_j}(1,p-j)$ is
vertex decomposable by our induction hypothesis.  Similarly, whether or not the deletion $\del_{\Delta_j}(1,p-j)$ is vertex decomposable
remains undetermined and we proceed to another iteration of this process.

At the final step of this process we test
the link and deletion of the vertex corresponding to the square $(1,1)$ on the board $\Psi_{a_{k+1}, \ldots , a_2} \cup \{(1,1)\}$, where
$(1,1)$ forms its own row and column.  This complex remains pure by Lemma~\ref{pure}.  Moreover,
$\lk_{\Delta_{p-1}}(1,1)$ = $\del_{\Delta_{p-1}}(1,1) = \Sigma(\Psi_{a_{k+1}, \ldots , a_2})$
which is vertex decomposable by our induction hypothesis, verifying vertex decomposability by
moving backwards through our deletions. Thus $\Sigma(\Pko)$ is vertex decomposable.
\end{proof}

Since $\Sigma(\Pn)$ is vertex decomposable for $a_i \ge i$ for all
$i=1, \ldots, n$,
we know by Theorem~\ref{thm_vd_implies_wedge} that
it will be homotopy equivalent to a wedge of spheres of dimension $n-1$ or contractible.  The number of spheres
can be computed by finding the reduced Euler characteristic which is the alternating sum of the coefficients of the
$f$-vector.  Let $\ell(i)$ denote the length of column $i$ in $\Pn$ with
$\displaystyle{\ell(i) = \sum_{j=i}^{n} a_j}.$

\begin{theorem}
The coefficients of the $f$-vector of $\Sigma(\Pn)$ are given by
$$
f_i=\sum_{S\in\binom{[n]}{i+1}} \prod_{j=0}^{i} (\ell(s_j)-j),
$$
where $S=\{s_0 > s_1 > \cdots > s_{i}\}$.
\end{theorem}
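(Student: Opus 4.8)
The plan is to interpret $f_i$ combinatorially and count directly. A face of dimension $i$ in $\Sigma(\Pn)$ is precisely a non-attacking placement of $i+1$ rooks on $\Pn$, so $f_i$ is the number of such placements. Because the rooks are non-attacking, no two share a column, and hence each placement determines a set $S$ of exactly $i+1$ of the $n$ columns, namely those columns that contain a rook. I would therefore partition the placements according to this column set and sum over all $S\in\binom{[n]}{i+1}$, reducing the problem to counting, for a fixed $S$, the number of ways to drop one rook into each column of $S$ so that no two rooks lie in a common row.

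The key structural observation is that $\Pn$ is a Ferrers board whose columns are nested. Since $\ell(1)\ge \ell(2)\ge\cdots\ge\ell(n)$ and column $i$ occupies the top $\ell(i)$ rows, the set of rows meeting a shorter column is contained in the set of rows meeting any taller column. Writing $S=\{s_0>s_1>\cdots>s_i\}$ orders the chosen columns from shortest to tallest, so that $\ell(s_0)\le\ell(s_1)\le\cdots\le\ell(s_i)$. I would place the rooks one column at a time in this order. In column $s_0$ there are $\ell(s_0)$ rows available. Having placed rooks in the $j$ shorter columns $s_0,\ldots,s_{j-1}$, all $j$ occupied rows lie inside column $s_j$ by the nesting property, so exactly $j$ of its $\ell(s_j)$ rows are forbidden and the rook in column $s_j$ has $\ell(s_j)-j$ admissible positions. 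Crucially, this count depends only on $j$ and not on which particular rows were previously used, so the choices multiply and give $\prod_{j=0}^{i}(\ell(s_j)-j)$ placements for the fixed column set $S$. Summing over all $S$ then yields the stated formula.

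The step I expect to require the most care is the nesting argument, namely verifying that at stage $j$ exactly $j$ rows are blocked regardless of the earlier choices; this is precisely what lets the counts factor into a product rather than depending on the history of the placement. I would also check the degenerate cases where $\ell(s_j)\le j$: if some factor is negative then, because $\ell(s_{j-1})\le\ell(s_j)$, an earlier factor $\ell(s_{j-1})-(j-1)$ must vanish, so the algebraic product is $0$, in agreement with the fact that there is then no admissible placement. Hence the product correctly records the number of placements in every case, and no separate argument for the boundary terms is needed.
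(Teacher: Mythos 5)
Your proposal is correct and follows essentially the same argument as the paper: partition the $(i+1)$-rook placements by their column set $S$, order the chosen columns from shortest to tallest so that placing rooks in that order blocks exactly $j$ rows of the $j$th column, and multiply to get $\prod_{j=0}^{i}(\ell(s_j)-j)$ before summing over $S$. Your additional care about the nesting of columns and the degenerate (zero-product) cases simply makes explicit details the paper leaves implicit.
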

\begin{proof}
We will compute the coefficients of the $f$-vector by considering all rook placements on the board $\Pn$.
Let $S$ be the collection of $i+1$ columns
where the rooks are placed.  Notice that $\ell(s_k) \leq \ell(s_{k+1})$.  Therefore, placing a rook
in column $s_k$ removes a possible location to place a rook in columns $s_{k+1},\ldots, s_{i}$.
Thus, there are $\displaystyle{\prod_{j=0}^{i} (\ell(s_j)-j)}$ ways to place $i+1$ rooks on
these $i+1$ columns.  The result follows by summing over all subsets of $i+1$ columns.
\end{proof}

Theorem~\ref{Mthm} mirrors the result of Ziegler~\cite{Ziegler} which
says that the (rectangular) chessboard complex $M_{n,m}$ is vertex decomposable
if $n\ge 2m-1$.  That is, extending a triangular board, like extending a
rectangular board, far enough allows one to conclude that the associated complex
is vertex decomposable.

\section{Facet Enumeration}\label{stirling_facets}

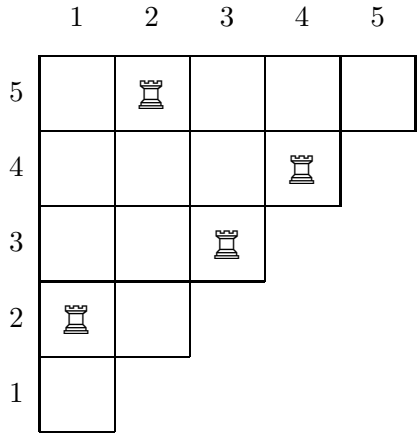
\begin{figure}
\begin{center}
\setlength{\unitlength}{1cm}
\begin{picture}(6,6)
\put(0,0){\line(0,1){5}}
\put(1,0){\line(0,1){5}}
\put(2,1){\line(0,1){4}}
\put(3,2){\line(0,1){3}}
\put(4,3){\line(0,1){2}}
\put(5,4){\line(0,1){1}}
\put(0,5){\line(1,0){5}}
\put(0,4){\line(1,0){5}}
\put(0,3){\line(1,0){4}}
\put(0,2){\line(1,0){3}}
\put(0,1){\line(1,0){2}}
\put(0,0){\line(1,0){1}}

\put(3.3,3.3){\Large{\rook}}
\put(2.3,2.3){\Large{\rook}}
\put(.3,1.3){\Large{\rook}}
\put(1.3,4.3){\Large{\rook}}

\put(-.4,.4){1}
\put(-.4,1.4){2}
\put(-.4,2.4){3}
\put(-.4,3.4){4}
\put(-.4,4.4){5}

\put(.4,5,4){1}
\put(1.4,5,4){2}
\put(2.4,5,4){3}
\put(3.4,5,4){4}
\put(4.4,5,4){5}
\end{picture}
\end{center}
\caption[A rook placement in bijective correspondence with \{\{1,3,4,5\},\{2,6\}\}.]{A rook placement in bijective correspondence with the partition
\{\{1,3,4,5\},\{2,6\}\}.}
\label{figure_rook_placement}
\end{figure}

We now turn our attention to the Stirling complex. Recall
the Stirling complex $\St(n)$ is the simplicial
complex associated to valid rook placements on the
triangular board of size $n$, $\Psi_{1,\ldots,1}$.  It is
clear that the Stirling complex is not pure. In this section,
we will enumerate the facets of the Stirling complex in each
dimension.

The $f$-vector of the Stirling complex is given by
Stirling numbers of the second kind; that is,
faces of the Stirling complex are in bijection with
partitions. This is done using the map $\mathcal{R}$
where any placement of $k$ non-attacking rooks gets mapped
to the partition $A$ where if a rook occupies the square
$(i,j)$ then $i$ and $j+1$ are in the same block of the
partition~$A$, see~\cite[Corollary~2.4.2]{StanleyVol1}.
In what follows, we show that
facets of the Stirling complex are in bijection with
a particular subset of partitions.

\begin{definition}
\emph{Let $B$ and $C$ be two disjoint nonempty subsets of $[n]$.  Then $B$ and $C$ are
\emph{intertwined} if $\max(B) > \min(C)$ and $\max(C) > \min(B)$.  We say a partition $P$
is \emph{intertwined} if every pair of blocks in $P$ is intertwined.}
\end{definition}

The idea of intertwined partitions first appeared with the
use of the \emph{intertwining number} of a partition
in~\cite{Ehrenborg_Readdy} where they were used to provide a
combinatorial interpretation for $q$-Stirling numbers of the second kind.
The following definitions are from~\cite{Ehrenborg_Readdy}.
For two integers $i$ and $j$, define the interval
$\inte(i,j)$ to be the set
$$
\inte(i,j) = \{n\in \BZ : \min(i,j) < n < \max(i,j)\}.
$$
\begin{definition}
\emph{For two disjoint nonempty subsets $B$ and $C$ of $[n]$, define
the \emph{intertwining number} $\iota(B,C)$ to be
$$
\iota(B,C) = |\{(b,c)\in B\times C:\inte(b,c)\cap(B\cup C)=\emptyset\}|.
$$}
\end{definition}

These two ideas are connected as can be seen in the following proposition.

\begin{proposition}
Let $B$ and $C$ be two disjoint nonempty subsets of $[n]$.  Then $B$ and $C$ are
intertwined if and only if $\iota(B,C)\geq 2$.
\label{prop_max_min_intertwined}
\end{proposition}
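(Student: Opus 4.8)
The plan is to reinterpret the intertwining number $\iota(B,C)$ combinatorially as a count of transitions along the linear order on $B\cup C$. First I would list the elements of $B\cup C$ in increasing order as $x_1 < x_2 < \cdots < x_m$. The key observation is that a pair $(b,c)\in B\times C$ satisfies $\inte(b,c)\cap(B\cup C)=\emptyset$ precisely when $b$ and $c$ are consecutive in this list, i.e. $\{b,c\}=\{x_i,x_{i+1}\}$ for some $i$: any element of $B\cup C$ lying strictly between $b$ and $c$ would belong to $\inte(b,c)$, while consecutive elements have nothing between them. Since the pair in the definition of $\iota$ is ordered, each such consecutive pair with one member in $B$ and the other in $C$ contributes exactly one term. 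Hence $\iota(B,C)$ equals the number of indices $i$ for which exactly one of $x_i,x_{i+1}$ lies in $B$ and the other in $C$, that is, the number of transitions between the two blocks as one scans the list from left to right.

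Next I would invoke the hypothesis $\min(B\cup C)\in B$, which forces $x_1\in B$. Because $B$ and $C$ are both nonempty, the membership sequence $x_1,\ldots,x_m$ starts in $B$ and meets $C$ at least once, so there is always at least one transition and $\iota(B,C)\geq 1$. It then remains to prove the two implications. If $B$ and $C$ are not intertwined, then $\max(B)\leq\min(C)$, and since $\max(B)\neq\min(C)$ by disjointness we in fact have $\max(B)<\min(C)$; thus every element of $B$ precedes every element of $C$, the sequence is a block of $B$'s followed by a block of $C$'s, and there is exactly one transition, so $\iota(B,C)=1$. Conversely, if $B$ and $C$ are intertwined, then $\max(B)>\min(C)$, so some element of $C$ appears in the list before the $B$-element $\max(B)$; since the list begins in $B$, later passes through this $C$-element, and still later reaches $\max(B)$, there must be at least two transitions, giving $\iota(B,C)\geq 2$.

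Finally I would combine these: the conditions $\iota(B,C)=1$ and $\iota(B,C)\geq 2$ are mutually exclusive and, given $\iota(B,C)\geq 1$, exhaustive, while ``intertwined'' and ``not intertwined'' are likewise complementary. The two implications above therefore upgrade to the full equivalence, so $B$ and $C$ are intertwined if and only if $\iota(B,C)\geq 2$. The argument is entirely elementary once the reinterpretation of $\iota(B,C)$ as a transition count is established; I expect that identification -- verifying that emptiness of $\inte(b,c)\cap(B\cup C)$ is equivalent to consecutiveness in the sorted order -- to be the only step demanding care, and it is the crux on which the remainder of the proof rests.
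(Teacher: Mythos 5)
Your proof is correct, and it is organized around a different key step than the paper's. The paper argues directly with the definition: for the forward direction it explicitly constructs two contributing pairs --- $(b_0,\min(C))$, where $b_0$ is the largest element of $B$ below $\min(C)$, and $(b_1,c_1)$, where $c_1$ is the largest element of $C$ below $\max(B)$ and $b_1$ the smallest element of $B$ above $c_1$ --- and for the reverse direction it observes that when $\max(B)<\min(C)$ the only pair with empty intervening set is $(\max(B),\min(C))$. You instead first prove a standalone reinterpretation: $\iota(B,C)$ equals the number of adjacent opposite-type pairs (``transitions'') in the sorted listing of $B\cup C$, which holds because a pair contributes exactly when its two members are consecutive in that listing, and disjointness makes the correspondence one-to-one. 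Both directions then become transparent counting statements, with the non-intertwined case giving exactly one transition and the intertwined case forcing at least two by a start-in-$B$, visit-$C$, return-to-$B$ path argument; you also handle the biconditional cleanly by noting the two conclusions are mutually exclusive. The underlying combinatorics coincide --- the paper's two constructed pairs are precisely the two transitions your argument guarantees --- but your version isolates a reusable lemma (the transition-count formula for $\iota$, which is implicit in Ehrenborg--Readdy's intertwining number) and avoids the bookkeeping of naming extremal elements, at the cost of having to verify the consecutiveness identification carefully, which you correctly flag as the crux.
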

\begin{proof}
($\Rightarrow$) Suppose $\min(B) < \min(C) < \max(B)$.
Let $b_0$ be the maximum element of $B$ such that
$\min(B)\leq b_0 < \min(C)$.  Then
$\inte(b_0,\min(C))\cap(B\cup C) = \emptyset$.
Let $c_1$ be the maximum element of $C$ such
that $c_1 < \max(B)$ and $b_1$
be the minimum element of $B$ such that
$c_1 < b_1$.  Then
$\inte(b_1,c_1)\cap(B\cup C) = \emptyset$.  Therefore,
the intertwining number is at least 2.

($\Leftarrow$) Suppose $\max(B)<\min(C)$.
Clearly, for $b\in B$ and $c\in C$,
$\inte(b,c)\cap(B\cup C)\neq \emptyset$
unless $b=\max(B)$ and $c=\min(C)$.  Thus, the
intertwining number is 1.
\end{proof}

The bijection between facets of the Stirling complex and
intertwined partitions can now be verified.

\begin{theorem}
The set of maximal rook placements with $k$ rooks on a triangular board $\Psi_{1,\ldots,1}$
of size $n$ is in bijection with intertwined partitions of $[n+1]$ into $n+1-k$ blocks.
\label{intertwined}
\end{theorem}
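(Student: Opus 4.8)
The plan is to transport the maximality condition through the bijection $\mathcal{R}$ and recognize it as the intertwining condition. Since $\mathcal{R}$ already maps a placement of $k$ non-attacking rooks to a partition of $[n+1]$ into $n+1-k$ blocks, it is enough to show that a placement is maximal if and only if its image under $\mathcal{R}$ is intertwined. Restricting $\mathcal{R}$ to maximal placements then yields the asserted bijection, and the block count $n+1-k$ is matched automatically.

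First I would read off, from a partition $A$, which columns and rows of the board carry rooks. Ordering the elements of each block as $b_1 < b_2 < \cdots < b_m$, the inverse map $\mathcal{R}^{-1}$ places rooks at the squares $(b_t, b_{t+1}-1)$ for $1 \le t \le m-1$. Consequently column $i$ is occupied if and only if $i$ is a non-maximal element of its block, and row $j$ is occupied if and only if $j+1$ is a non-minimal element of its block. Equivalently, column $i$ is empty exactly when $i=\max(B)$ for its block $B$, and row $j$ is empty exactly when $j+1=\min(C)$ for its block $C$.

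Next I would interpret the act of adding a rook. A rook may be legally added at a square $(i,j)$ with $i \le j$ precisely when its column and row are both empty, that is, when $i=\max(B)$ and $j+1=\min(C)$ for some blocks $B$ and $C$; here the board condition $i \le j$ is equivalent to $\max(B) < \min(C)$, which forces $B \neq C$. Under $\mathcal{R}$ such an addition corresponds to merging the two ``separated'' blocks $B$ and $C$ (all of $B$ lying below all of $C$) into one. Hence a placement fails to be maximal exactly when its partition contains two blocks $B,C$ with $\max(B)<\min(C)$.

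Finally I would match this with the definition of intertwined. A pair of blocks fails to be intertwined if and only if one lies entirely below the other, i.e.\ $\max(B)<\min(C)$, disjointness ruling out equality; so the partition is intertwined if and only if no such pair occurs. This is exactly the maximality criterion established above, giving maximal $\iff$ intertwined. The only delicate point is the index bookkeeping: keeping straight the shift between a rook's row $j$ and the partition element $j+1$, and verifying that the witnessing square $(\max(B),\min(C)-1)$ really lies on the triangular board (that $1\le \max(B)\le \min(C)-1\le n$). Everything else is a direct translation; Proposition~\ref{prop_max_min_intertwined} is not strictly needed here, though the criterion could equally be phrased through $\iota(B,C)\ge 2$.
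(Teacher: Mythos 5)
Your proposal is correct and follows essentially the same route as the paper: both arguments rest on the observations that column $i$ is empty exactly when $i$ is the maximum of its block, row $j$ is empty exactly when $j+1$ is the minimum of its block, and hence a rook can be added at $(i,j)$ precisely when some pair of blocks satisfies $\max(B)<\min(C)$, i.e.\ fails to be intertwined. The only difference is presentational --- you phrase it as a single equivalence via the explicit inverse of $\mathcal{R}$, while the paper proves the two contrapositive implications separately --- so no substantive comparison is needed.
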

\begin{proof}
We first show that a maximal rook placement gives rise to an intertwined partition.
Let $P = \{P_1,P_2, \ldots, P_{n+1-k}\}$ be a partition of $n+1$ into $n+1-k$ blocks
such that there exist two blocks, $P_i$ and $P_j$, that are not intertwined.
Then, without loss of generality,
$M_i < m_j$ where $M_i$ is the maximal element of $P_i$ and $m_j$ is the minimal
element of $P_j$.  It is clear that $\mathcal{R}^{-1}(P) \cup \{(M_i, m_{j-1})\}$ is a
rook placement.  This shows $\mathcal{R}^{-1}(P)$ is not maximal.




We now show that an intertwined partition gives rise to a maximal rook placement.
Suppose~$R_k$ is not a maximal rook placement.  We consider $\CR(R_k) = Q$.
As $R_k$ is not maximal, there exists a square $(i,j)$ where we may place a rook.
As there is no rook
in column $i$, this implies that $i$ is the maximal element of some block
$Q_i$ in $Q$.  Similarly, no rook in row $j$ implies that $j+1$
is the minimal element of some block $Q_j$ in $Q$.  Hence, $Q$ contains
two blocks that are not intertwined.
\end{proof}

We now count the number of partitions with intertwined blocks.

\begin{theorem}
The number of partitions of $[n]$ into $k$ intertwined blocks is given by
$$
I(n,k)=(k-1)! \sum_{i=k-1}^{n-k}S(i,k-1)\cdot S(n-i,k).
$$
\label{partition_count}
\end{theorem}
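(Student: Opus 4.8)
The plan is to prove the formula by a decomposition/bijection argument that cuts each intertwined partition at a distinguished value. First I would normalize the intertwined condition. Given a partition of $[n]$ into $k$ blocks, order the blocks $B_1,\ldots,B_k$ so that $\min(B_1)<\cdots<\min(B_k)$ (the minima are distinct since the blocks are disjoint), and write $m=\min(B_k)$. I claim the partition is intertwined if and only if $\max(B_i)>m$ for every $i<k$. Indeed, for $i<j$ we have $\min(B_j)\le m$, so $\max(B_i)>m\ge\min(B_j)$ already forces $\max(B_i)>\min(B_j)$, which is the intertwining of the pair $(B_i,B_j)$; conversely, intertwining of the pair $(B_i,B_k)$ is exactly $\max(B_i)>m$. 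Since $\min(B_i)<m$ holds automatically for $i<k$, this says precisely that each of $B_1,\ldots,B_{k-1}$ contains an element below $m$ and an element above $m$ (it \emph{straddles} $m$), while $B_k\subseteq\{m,m+1,\ldots,n\}$ with $\min(B_k)=m$.

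Next I would build the bijection by cutting $[n]$ at $m$. Put $L=\{1,\ldots,m-1\}$ and $U=\{m,m+1,\ldots,n\}$, and set $i=|L|=m-1$. Restricting the blocks to $L$ gives a partition of $L$ into exactly $k-1$ nonempty blocks (the lower parts of $B_1,\ldots,B_{k-1}$, nonempty because each straddles $m$), contributing the factor $S(i,k-1)$. Restricting to $U$ gives a partition of $U$ into exactly $k$ nonempty blocks, namely the $k-1$ upper parts of $B_1,\ldots,B_{k-1}$ together with $B_k$ itself, contributing $S(n-i,k)$; here the block containing $m=\min(U)$ is distinguished as the copy of $B_k$. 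Conversely, to reconstruct an intertwined partition from such data, I pair each of the $k-1$ lower parts with one of the $k-1$ non-distinguished upper parts and take unions. There are $(k-1)!$ such matchings, and each produces blocks that straddle $m$, hence an intertwined partition by the criterion above.

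I would then verify the correspondence is a genuine bijection and assemble the count. From an intertwined partition, the value $m$, the sets $L$ and $U$, the two restricted set partitions, the distinguished block, and the matching are all recovered uniquely, so the map from a triple (a partition of $L$ into $k-1$ blocks, a partition of $U$ into $k$ blocks, a matching of lower and upper parts) to intertwined partitions is a bijection. Summing over the admissible values of $m$ gives
$$
I(n,k)=\sum_m (k-1)!\,S(m-1,k-1)\,S(n-m+1,k)=(k-1)!\sum_{i=k-1}^{n-k} S(i,k-1)\,S(n-i,k),
$$
where the range $k-1\le i\le n-k$ comes from requiring $|L|=i\ge k-1$ (so $L$ splits into $k-1$ nonempty blocks) and $|U|=n-i\ge k$ (so $U$ splits into $k$ nonempty blocks).

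The step I expect to be the main obstacle is the reduction of the full condition ``every pair of blocks is intertwined'' to the single family of inequalities $\max(B_i)>m$ for $i<k$, i.e.\ to the straddling picture; once this is in place, the split at $m$ and the $(k-1)!$ matchings follow cleanly. I would also take care to confirm that no hidden constraints appear in the reconstruction: an \emph{arbitrary} matching of the lower and upper parts must always yield a valid intertwined partition, with the distinguished block left unmatched, so that the factor is exactly $(k-1)!$ with neither over- nor undercounting.
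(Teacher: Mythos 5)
Your proof is correct and follows essentially the same route as the paper's: both cut $[n]$ at the largest block-minimum $m$, split into a partition of $\{1,\ldots,m-1\}$ into $k-1$ blocks and a partition of $\{m,\ldots,n\}$ into $k$ blocks with the block containing $m$ distinguished, and recombine via one of the $(k-1)!$ pairings of lower parts with non-distinguished upper parts. Your explicit straddling lemma ($P$ is intertwined if and only if $\max(B_i)>m$ for all $i<k$) is a cleaner packaging of the verification the paper carries out pairwise in both directions, but the underlying decomposition and count are identical.
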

\begin{proof}
Let $1\le i \le n$.  Let $P$ be a partition of
$[i]$ into $k-1$ blocks.  This can be done in $S(i,k-1)$ ways.  Next, let
$Q$ be a partition of the remaining $n-i$ elements into $k$ blocks which can
be done in $S(n-i,k)$ ways.
In order to combine these two partitions into a single partition
of $[n]$ into $k$ intertwined blocks, ignore the block containing $i+1$
in $Q$ and pair each block of $P$ to exactly one unique block in $Q$.  This
can be done in $(k-1)!$ ways.
Clearly any partition obtained with this construction is intertwined.
It is also straightforward to see that any intertwined partition can be
obtained in this way.

We now show that every partition with intertwined blocks can
be obtained uniquely in this way.
When summing over $i$, notice that $i+1$ is the largest minimal element
of the blocks of the partition.  Thus, as $i$ varies, so do the intertwined partitions
generated.
For a fixed $i$, it is clear that any intertwined partition can be obtained in at most one way.
For an example of this process, see Figure~\ref{fig_intertwined}.

\end{proof}

\begin{figure}
\setlength{\unitlength}{0.75mm}
\begin{center}
\begin{picture}(110,50)(0,0)
\thicklines

\put(10,15){\circle*{2}}
\put(9,10){\small{1}}
\put(20,15){\circle*{2}}
\put(19,10){\small{2}}
\put(30,15){\circle*{2}}
\put(29,10){\small{3}}

\put(40,0){\line(0,1){50}}
\put(50,0){\line(0,1){50}}

\put(60,15){\circle*{2}}
\put(59,10){\small{4}}
\put(70,15){\circle*{2}}
\put(69,10){\small{5}}
\put(80,15){\circle*{2}}
\put(79,10){\small{6}}
\put(90,15){\circle*{2}}
\put(89,10){\small{7}}
\put(100,15){\circle*{2}}
\put(99,10){\small{8}}


\setdashes<2pt>

\setlinear
\plot 85 53.5 107 53.5 /
\plot 85 64 107 64 /

\setdots<2pt>
\plot 85 64 107 53.5 /
\plot 85 53.5 107 64 /

\setsolid


\thicklines
\qbezier(10,15)(10,30)(40,30)

\qbezier(20,15)(25,30)(30,15)
\qbezier(30,15)(30,25)(40,25)

\qbezier(60,15)(80,50)(100,15)

\qbezier(50,25)(70,25)(70,15)
\qbezier(70,15)(80,30)(90,15)

\qbezier(50,30)(80,30)(80,15)
\end{picture}
\end{center}
\caption[This diagram denotes the intertwined partition $\{\{1,6\},\{2,3,5,7\},\{4,8\}\}$ ]{This diagram denotes the intertwined partition
$\{\{1,6\},\{2,3,5,7\},\{4,8\}\}$
or the intertwined partition
$\{\{1,5,7\},\{2,3,6\},\{4,8\}\}$
depending on how the edges from the first part
are connected to the edges of the second part.
}
\label{fig_intertwined}
\end{figure}
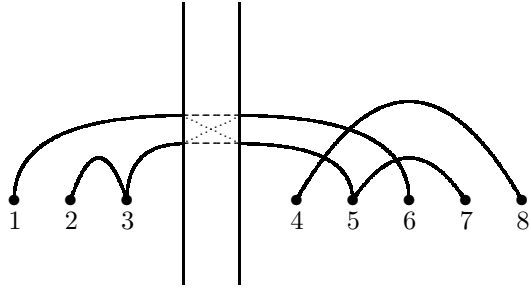

\begin{corollary}
The number of maximal rook placements of size $n-k$ on a triangular
board $\Psi_{1,\ldots,1}$ of size $n$ is given by
$$
F_{n-k}^n = k! \sum_{i=k}^{n-k} S(i,k)\cdot S(n+1-i,k+1).
$$
\label{cor_facet_count}
\end{corollary}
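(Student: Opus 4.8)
The plan is to combine the two preceding theorems and carry out a single index substitution. Since this is a corollary, I expect no genuinely new mathematical content: the substantive work has already been done in establishing the bijection between maximal rook placements and intertwined partitions, and in producing the closed form for the count $I(n,k)$. The only care required is matching the parameters correctly through the various index shifts.

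First I would invoke the bijection theorem, which states that maximal rook placements with $k$ rooks on $\Psi_{1,\ldots,1}$ of size $n$ correspond to intertwined partitions of $n+1$ into $n+1-k$ blocks. Applying this with a placement of size $n-k$ (that is, with $n-k$ rooks) shows that such placements are in bijection with intertwined partitions of $[n+1]$ into $n+1-(n-k) = k+1$ blocks. Hence
$$
F_{n-k}^n = I(n+1,\, k+1),
$$
the number of intertwined partitions of $[n+1]$ into $k+1$ blocks.

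Next I would substitute into the formula $I(n,k)=(k-1)!\sum_{i=k-1}^{n-k}S(i,k-1)\cdot S(n-i,k)$, replacing $n$ by $n+1$ and $k$ by $k+1$. The prefactor $(k-1)!$ becomes $((k+1)-1)! = k!$; the lower summation index $k-1$ becomes $k$; the upper index $n-k$ becomes $(n+1)-(k+1) = n-k$; and the summand $S(i,k-1)\cdot S(n-i,k)$ becomes $S(i,k)\cdot S(n+1-i,k+1)$. Collecting these gives
$$
F_{n-k}^n = I(n+1,\,k+1) = k!\sum_{i=k}^{n-k} S(i,k)\cdot S(n+1-i,k+1),
$$
which is exactly the claimed formula.

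The only point demanding genuine attention is confirming that the summation bounds transform cleanly, in particular that the upper limit collapses back to $n-k$ after the shift; everything else is mechanical. I therefore expect no real obstacle beyond this bookkeeping, and it may even be worth remarking that the two parameter changes conspire to leave the summation range visually unchanged.
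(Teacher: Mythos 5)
Your proof is correct and is exactly the intended derivation: the paper states this as a corollary with no written proof, and the implied argument is precisely your substitution $F_{n-k}^n = I(n+1,k+1)$ into the intertwined-partition count, with the index shifts working out as you describe.
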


Using this corollary, we can write the generating function for the facets
of the Stirling complex.  It is interesting to note that the generating function
obtained is the product of the generating function for $S(n,k)$, a shift of the
generating function for $S(n,k+1)$, and $k!$.

\begin{corollary}
The generating function for $F_{n-k}^n$ is given by
$$
\sum_{n\geq 0} F_{n-k}^n x^n
=
\frac{k!\cdot x^{2k}}
     {\left(\prod_{i=1}^k (1-ix)\right)^2\cdot(1-(k+1)x)}.
$$
\end{corollary}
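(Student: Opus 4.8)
The plan is to read Corollary~\ref{cor_facet_count} as a Cauchy product of two Stirling generating functions and then invoke the classical identity
$$
\sum_{n\geq 0} S(n,m)\, x^n = \frac{x^m}{\prod_{i=1}^m (1-ix)}.
$$
Concretely, I would set $A(x) = \sum_{n\geq 0} S(n,k)\,x^n$ and $B(x) = \sum_{n\geq 0} S(n,k+1)\,x^n$, write their product as $A(x)B(x) = \sum_{m\geq 0} c_m x^m$ with $c_m = \sum_{i} S(i,k)\,S(m-i,k+1)$, and match this against the sum in Corollary~\ref{cor_facet_count}.

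The first substantive step is to justify that the restricted sum $\sum_{i=k}^{n-k}$ appearing in $F_{n-k}^n$ equals the full convolution $c_{n+1}$. This follows because $S(i,k)=0$ for $i<k$ and $S(n+1-i,k+1)=0$ once $n+1-i<k+1$, i.e.\ $i>n-k$; outside the stated limits every summand vanishes, so the bounds may be extended to all integers without changing the sum. Hence $F_{n-k}^n = k!\,c_{n+1}$.

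The second step handles the index shift by one. Writing $C(x)=A(x)B(x)=\sum_{m\ge 0}c_m x^m$, I have
$$
\sum_{n\geq 0} F_{n-k}^n\, x^n = k!\sum_{n\geq 0} c_{n+1}\, x^n = \frac{k!}{x}\bigl(C(x)-c_0\bigr),
$$
and since $c_0 = S(0,k)\,S(0,k+1)=0$ for $k\ge 1$, this collapses to $k!\,C(x)/x$. Substituting the explicit forms gives
$$
\frac{k!}{x}\cdot\frac{x^k}{\prod_{i=1}^k(1-ix)}\cdot\frac{x^{k+1}}{\prod_{i=1}^{k+1}(1-ix)} = k!\cdot\frac{x^{2k}}{\prod_{i=1}^k(1-ix)\cdot\prod_{i=1}^{k+1}(1-ix)},
$$
and factoring $\prod_{i=1}^{k+1}(1-ix) = \bigl(\prod_{i=1}^k(1-ix)\bigr)(1-(k+1)x)$ yields exactly the claimed expression.

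Honestly, this argument is essentially a bookkeeping computation once the convolution structure is recognized, so there is no deep obstacle. The only points requiring genuine care are the two index issues above: confirming that the truncated summation limits in Corollary~\ref{cor_facet_count} coincide with the untruncated Cauchy convolution (so that the product of generating functions is legitimate), and correctly accounting for the shift $n\mapsto n+1$ together with the vanishing of the constant term $c_0$. I would present those verifications explicitly and treat the remaining algebraic simplification as routine.
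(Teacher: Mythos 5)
Your proposal is correct and follows essentially the same route as the paper: both recognize the sum in Corollary~\ref{cor_facet_count} as a Cauchy product (extendable beyond its stated limits since the Stirling numbers vanish there), apply the classical identity $\sum_{n\geq 0} S(n,m)x^n = x^m/\prod_{i=1}^m(1-ix)$, and factor out $(1-(k+1)x)$. The only difference is bookkeeping --- the paper absorbs the index shift into the shifted generating function $\sum_{n\geq 0}S(n+1,k+1)x^n = x^k/\prod_{i=1}^{k+1}(1-ix)$, whereas you divide by $x$ and discard the vanishing constant term $c_0$; the two are equivalent.
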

\begin{proof}
We have
\begin{eqnarray*}
\sum_{n\geq 0} F_{n-k}^n x^n
&=& k! \sum_{n\geq 0} \sum_{i=0}^{n} S(i,k)\cdot S(n+1-i,k+1) x^n\\
&=& k! \left( \sum_{n\geq 0} S(n,k)x^n \right)\cdot
       \left( \sum_{n\geq 0} S(n+1,k+1)x^n \right)\\
&=& k! \left( \frac{x^k}{\prod_{i=1}^k (1-ix)} \right)\cdot
       \left( \frac{x^k}{\prod_{i=1}^{k+1} (1-ix)} \right)\\
&=& \frac{k!\cdot x^{2k}}
         {\left(\prod_{i=1}^k (1-ix)\right)^2\cdot(1-(k+1)x)}.
\end{eqnarray*}
\end{proof}

\section{Homology}\label{stirling_homology}

In this section we examine the topology of the Stirling complex.  Work on this
has been done by Barmak~\cite{Barmak} where he showed that the Stirling
complex $\St(n)$ is $\left\lfloor \frac{n-3}{2} \right\rfloor$-connected.
Our technique uses discrete
Morse theory by defining poset maps and creating a Morse matching using
the Patchwork theorem.  We provide an alternate proof of Barmak's connectivity
bound and further give a partial
description of its homotopy type when $n$ is even.

\begin{figure}
\begin{center}
{
\setlength{\unitlength}{1cm}
\begin{picture}(6,6)
\put(0,0){\line(0,1){5}}
\put(1,0){\line(0,1){5}}
\put(2,1){\line(0,1){4}}
\put(3,2){\line(0,1){3}}
\put(4,3){\line(0,1){2}}
\put(5,4){\line(0,1){1}}
\put(0,5){\line(1,0){5}}
\put(0,4){\line(1,0){5}}
\put(0,3){\line(1,0){4}}
\put(0,2){\line(1,0){3}}
\put(0,1){\line(1,0){2}}
\put(0,0){\line(1,0){1}}

\put(4.4,4.4){$1$}
\put(.4,.4){$5$}
\put(3.4,3.4){$2$}
\put(1.4,1.4){$4$}
\put(2.4,2.4){$3$}
\put(2.4,3.4){$7$}
\put(1.4,2.4){$8$}
\put(3.4,4.4){$6$}
\put(.4,1.4){$9$}
\put(2.3,4.4){$10$}
\put(.3,2.4){$12$}
\put(1.3,3.4){$11$}
\put(1.3,4.4){$13$}
\put(.3,3.4){$14$}
\put(.3,4.4){$15$}

\put(-.4,.4){1}
\put(-.4,1.4){2}
\put(-.4,2.4){3}
\put(-.4,3.4){4}
\put(-.4,4.4){5}

\put(.4,5,4){1}
\put(1.4,5,4){2}
\put(2.4,5,4){3}
\put(3.4,5,4){4}
\put(4.4,5,4){5}

\end{picture}

}

\end{center}
\caption{The ordering $Q$ on the squares of the triangular board $\Psi_{1,1,1,1,1}$}
\label{new_order}
\end{figure}

Fix a postive integer $n$ and consider the corresponding
triangular board $\Psi_{1,\ldots, 1}$.  
We define a total ordering $Q$ on the squares of this board  as follows:
for $(i,j), (k,l) \in \Psi_{1,\ldots, 1}$, we have $(i,j) <_Q (k,l)$ if either $|j-i| < |k-l|$, or $|j-i| = |k-l|$ and $i>k$.
Using this order, we may enumerate the squares of $\Psi_{1,\ldots, 1}$ one through $\binom{n+1}{2}$, see Figure~\ref{new_order}.

Let $Q_1$ be the sub-chain of $Q$ consisting of the lowest elements
$(n,n)<_Q (n-1,n-1)<_Q \cdots <_Q (1,1)$ 
adjoined with a maximal element $\widehat{1}_{Q_1}$.
We define a map $\varphi$ from the face poset of $\St(n)$
to the poset $Q_1$.  For $x\in \St(n)$, let
\[
\varphi(x) =
\left\{
\begin{array}{ll}
(i,i), & \mbox{if\ }(i,i) \mbox{\ is the smallest in $Q_1$ such that}\\
       & x\cup \{(i,i)\} \in \St(n) \\                         
\widehat{1}_{Q_1}, &  x\cup \{(i,i)\} \notin \St(n) \mbox{\ for every\ } i.                                              
\end{array}
\right.
\]

\begin{lemma}
The map $\varphi:\mathcal{F}(\St(n))\longrightarrow Q_1$ is an order-preserving poset map.
\end{lemma}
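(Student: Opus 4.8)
The plan is to prove the inequality $\varphi(x)\le_{Q_1}\varphi(y)$ directly for any two faces with $x\subseteq y$, exploiting that $Q_1$ is a chain and that $\varphi$ is defined by taking a minimum. To organize this, for each diagonal square $(i,i)$ I would name its \emph{hook}
$$
H_i=\{(1,i),\ldots,(i-1,i),(i,i+1),\ldots,(i,n)\},
$$
the squares attacking $(i,i)$ other than $(i,i)$ itself, and set $A_x=\{(i,i):H_i\cap x=\emptyset\}$, the collection of diagonal squares whose hook is unoccupied by the placement $x$. By definition $\varphi(x)$ is the $Q_1$-smallest element of $A_x$ when $A_x\neq\emptyset$, and $\widehat{1}_{Q_1}$ otherwise; since $Q_1$ is a finite chain with top element $\widehat{1}_{Q_1}$, this makes $\varphi$ a well-defined map into $Q_1$.

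The crux is a single monotonicity observation: if $x\subseteq y$ then $A_y\subseteq A_x$. Indeed, if $(i,i)\in A_y$, so that $H_i\cap y=\emptyset$, then $H_i\cap x\subseteq H_i\cap y=\emptyset$, whence $(i,i)\in A_x$. Informally, enlarging a rook placement can only occupy additional hook squares, so it may destroy empty hooks but can never create them; thus the set of diagonal squares eligible to be $\varphi$-values can only shrink as the face grows.

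Granting this, order-preservation follows by splitting into two cases. If $A_y=\emptyset$ then $\varphi(y)=\widehat{1}_{Q_1}$ is the maximum of $Q_1$, so $\varphi(x)\le_{Q_1}\varphi(y)$ holds regardless of the value of $\varphi(x)$. If $A_y\neq\emptyset$ then $A_x\supseteq A_y$ is also nonempty, and since in a chain the minimum taken over a subset is weakly larger than the minimum taken over the whole set, we obtain $\varphi(x)=\min_{Q_1}(A_x)\le_{Q_1}\min_{Q_1}(A_y)=\varphi(y)$. In either case the map is order-preserving.

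I expect no serious obstacle here; essentially all the content sits in the containment $A_y\subseteq A_x$, which is immediate once the hooks are named correctly. The one point deserving explicit care is the degenerate branch, since the ``otherwise'' clause of $\varphi$ is exactly where $A$ becomes empty: one must confirm that the adjoined top $\widehat{1}_{Q_1}$ of the target chain is what makes the inequality go through when $A_y=\emptyset$, and that $\varphi$ is genuinely single-valued (which it is, because $Q_1$ is totally ordered and finite, so the minimum of any nonempty $A_x$ is unique).
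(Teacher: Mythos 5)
Your proof is correct and takes essentially the same approach as the paper: both arguments rest on the single observation that a hook $H_i$ disjoint from $y$ is disjoint from any $x\subseteq y$, so the set of eligible diagonal squares can only shrink as the face grows, forcing the $Q_1$-minimum to weakly increase (with the adjoined top $\widehat{1}_{Q_1}$ handling the empty case). Your notation $A_x$ and the explicit case split on $A_y$ simply formalize what the paper's proof says in words.
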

\begin{proof}
Let $x,y\in \St(n)$ with $x\subset y$.  Suppose $\varphi(x)=(i,i)$.  That is, $(i,i)$ is
the smallest ordered pair such that
$(1,i),(2,i),\ldots,(i-1,i),(i,i+1),(i,i+2),\ldots,(i,n)$ are not elements of $x$.  Since $y$
contains $x$, $\varphi(y)$ can be no smaller than $(i,i)$.  Therefore,
$\varphi(x)\leq \varphi(y)$.  Suppose $\varphi(x)=\widehat{1}_{Q_1}$.  Since
$y$ contains $x$, we have $\varphi(y)=\widehat{1}_{Q_1}$ also.
\end{proof}

\begin{lemma}
For $(i,i) <_{Q_1} \widehat{1}_{Q_1}$, the collection
$\{(x,x\cup \{(i,i)\})\ :\ (i,i)\not\in x\in \varphi^{-1}(i,i)\}$
is a perfect acyclic matching on the fiber $\varphi^{-1}(i,i)$.
\label{lemma_fibers}
\end{lemma}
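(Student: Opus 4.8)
The plan is to verify in turn the three ingredients of a perfect acyclic matching: that the proposed pairing $x \leftrightarrow x\cup\{(i,i)\}$ is a legitimate matching that stays inside the fiber $\varphi^{-1}(i,i)$, that it is perfect, and that it is acyclic. Everything rests on one bookkeeping observation, which I would isolate first: the diagonal square $(i,i)$ never appears in the obstruction list $(1,j),\ldots,(j-1,j),(j,j+1),\ldots,(j,n)$ attached to any other diagonal square $(j,j)$ with $j\neq i$. Those squares lie strictly in row $j$ left of the diagonal or strictly in column $j$ above it, and a one-line index check excludes $(i,i)$. Hence whether or not $(i,i)$ belongs to a face $x$ has no effect on whether the obstruction list of any $(j,j)$ with $j\neq i$ is met.

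First I would show the matching is well-defined and fiber-preserving. Take $x\in\varphi^{-1}(i,i)$ with $(i,i)\notin x$. By definition of $\varphi$, none of the squares in the obstruction list for $(i,i)$ lies in $x$; but this list is exactly every square of row $i$ and column $i$ other than $(i,i)$ itself, so $x\cup\{(i,i)\}$ places no two rooks in a common row or column and is a genuine face of $\St(n)$. To see $\varphi(x\cup\{(i,i)\})=(i,i)$, note the obstruction list for $(i,i)$ is still unmet (we added only $(i,i)$, which is not on it), while for each $(j,j)<_{Q_1}(i,i)$ the list was already met by $x$ and stays met after enlarging $x$; by the key observation the presence of $(i,i)$ changes neither fact. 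Thus $x\cup\{(i,i)\}$ lies in the same fiber. Perfectness then follows by cases: if $(i,i)\notin x$ the face is matched upward as above, while if $(i,i)\in x$ then $\varphi(x)=(i,i)$ forces the obstruction list for $(i,i)$ to be unmet, and the key observation gives $\varphi(x\setminus\{(i,i)\})=(i,i)$ as well, so $x$ is matched downward to $x\setminus\{(i,i)\}$. No cell is left critical.

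Acyclicity is the standard element-matching argument, and I expect it to be the least delicate point. A cycle $z_1\succ d(z_1)\prec z_2\succ\cdots\prec z_m\succ d(z_m)\prec z_1$ with distinct $z_\ell$ would have each $z_\ell$ containing $(i,i)$ and $d(z_\ell)=z_\ell\setminus\{(i,i)\}$. The cover $d(z_\ell)\prec z_{\ell+1}$ forces $z_{\ell+1}$ to be $d(z_\ell)$ with a single extra vertex; since $z_{\ell+1}$ contains $(i,i)$ while $d(z_\ell)$ does not, that vertex must be $(i,i)$, giving $z_{\ell+1}=z_\ell$ and contradicting distinctness. Hence no cycle exists and the matching is acyclic.

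The hard part, such as it is, lives entirely in the first two steps: confirming that $\varphi$ returns the same value $(i,i)$ after adjoining or deleting the square $(i,i)$. This is precisely where the index computation that $(i,i)$ plays no role in the obstruction lists of the other diagonal squares does the real work; once it is in hand, well-definedness, perfectness, and acyclicity all fall out routinely.
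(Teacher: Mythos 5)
Your proof is correct and follows essentially the same approach as the paper's: match by adding/removing $(i,i)$, check the fiber is preserved, and note acyclicity is automatic for a single-element matching. The paper simply asserts that $\varphi$ is unchanged by adding or deleting $(i,i)$, whereas you spell out the key index observation (that $(i,i)$ lies in no other diagonal square's obstruction list) that justifies it.
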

\begin{proof}
Suppose $\varphi(x)=(i,i)$ and $(i,i)\not\in x$.
By definition of the function $\varphi$,
$u(x)=x\cup \{(i,i)\}$ is a valid rook placement
in $\St(n)$.  Also, $\varphi(u(x))= (i,i)$.
Suppose $\varphi(x)=(i,i)$ and $(i,i)\in x$.  It is clear that
$d(x)=x-\{(i,i)\}$ is a valid rook placement.  Also, removing
the element $(i,i)$ will not affect the mapping under $\varphi$.
Therefore, $\varphi(d(x))=(i,i)$.
Finally, this matching is clearly acyclic since the same element
is either added to or removed from a placement.
\end{proof}

Using the Patchwork theorem, we have an acyclic matching on
$\mathcal{F}(\St(n))$ whose only critical cells are the elements
of the fiber $\Gamma=\varphi^{-1}(\widehat{1}_{Q_1})$.  From the
definition of the function $\varphi$, the following is clear.
\begin{lemma}
The rook placement $x$ is an element of
$\Gamma$ if and only if for each $i\in [n]$, there is a $j\neq i$ such that
$(i,j)\in x$ or $(j,i)\in x$.  Additionally, the rook placement $x$ has
at least $\lceil n/2\rceil$ rooks.
\label{lemma_elements_of_gamma}
\end{lemma}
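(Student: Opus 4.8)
The plan is to prove the statement by directly unwinding the definition of $\varphi$ together with the definition of the fiber $\Gamma=\varphi^{-1}(\widehat{1}_{Q_1})$. The first thing I would pin down is that the chain $Q_1$ contains \emph{every} diagonal square $(i,i)$ with $i\in[n]$. Reading off the description, $Q_1$ is $(n,n)<(1,1)<(n-1,n-1)<\cdots<(\lceil n/2\rceil,\lceil n/2\rceil)$, and the indices $i$ appearing there exhaust $1,\ldots,n$: these are exactly the squares on the main diagonal $j-i=0$, whose relative order is governed by the even integers $2i=i+j$ under $<_P$, and $2i$ ranges over all of $2,4,\ldots,2n$. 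Consequently the ``otherwise'' clause in the definition of $\varphi$ can be read as: $\varphi(x)=\widehat{1}_{Q_1}$ precisely when, for \emph{every} $i\in[n]$, it fails that all of $(1,i),\ldots,(i-1,i),(i,i+1),\ldots,(i,n)$ avoid $x$; equivalently, for every $i$ at least one of these squares lies in $x$.

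Next I would translate this ``hook'' condition into the condition stated in the lemma. The squares $(1,i),\ldots,(i-1,i)$ are exactly the board squares in row $i$ strictly left of the diagonal, i.e.\ the squares $(j,i)$ with $j<i$, while $(i,i+1),\ldots,(i,n)$ are exactly the board squares in column $i$ strictly above the diagonal, i.e.\ the squares $(i,j)$ with $j>i$. Because a square $(a,b)$ exists on $\Psi_{1,\ldots,1}$ only when $a\le b$, for a fixed $i$ and any $j\neq i$ exactly one of $(i,j)$, $(j,i)$ is a legal square, and that legal square is precisely a member of the hook of $(i,i)$. Hence ``some square of the hook of $(i,i)$ belongs to $x$'' is logically identical to ``there exists $j\neq i$ with $(i,j)\in x$ or $(j,i)\in x$.''

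Combining the two steps yields the claim by a chain of equivalences: $x\in\Gamma$ iff $\varphi(x)=\widehat{1}_{Q_1}$ iff for every $i\in[n]$ the hook of $(i,i)$ meets $x$ iff for every $i\in[n]$ there is a $j\neq i$ with $(i,j)\in x$ or $(j,i)\in x$. In particular the stated sufficient condition holds, and in fact one obtains the converse for free. I do not anticipate a genuine obstacle, as every step is a notational translation; the only point demanding care is the role of the restriction $j\neq i$. This exclusion is exactly what prevents a rook sitting on $(i,i)$ from being counted, which matters because in a non-attacking placement such a rook is the unique occupant of row/column $i$ that does \emph{not} lie on the hook. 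Making that exclusion explicit is what upgrades the naive ``there is a rook in row or column $i$'' into the precise hook condition, and it is the one place I would take care to state cleanly.
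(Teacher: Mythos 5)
Your proof is correct and matches the paper's approach: the paper simply asserts the lemma is ``clear from the definition of $\varphi$,'' and your argument is precisely that unwinding, done carefully (noting that $Q_1$ exhausts all diagonal squares $(i,i)$, that the hook of $(i,i)$ consists exactly of the legal squares $(j,i)$, $j<i$, and $(i,j)$, $j>i$, and that the $j\neq i$ restriction excludes a rook on $(i,i)$ itself). You even establish the converse implication, which the paper's statement omits but which is valid.
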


We now turn our attention to boards of even size, that is, consider $\St(2n)$.
Recall that the \emph{Durfee square} of a Ferrers board
is the largest square sub-board consisting of adjacent rows and columns and containing the upper left-hand corner of the board.
For example, the Durfee square of the board in Figure~\ref{figure_rook_placement}
is of size three.  It is clear that the Durfee square of the board $\Psi_{1,\ldots,1}$ of size $2n$
is of size $n$.

From Lemma~\ref{lemma_elements_of_gamma}
we know that for $x\in \Gamma$, $x$ will contain at least $n$ rooks.  Assume that $x\in \St(2n)$
contains exactly $n$ rooks.  The placement $x$ falls into one of two cases.

\noindent {\bf Case 1:} Assume the rooks of $x$ are completely contained in the Durfee square.  It is clear that the
placement $x$ is maximal, and there are $n!$ such placements.  Also, since there are no rook placements
in $\Gamma$ of size $n-1$, $x$ can never be a part of a Morse matching on $\Gamma$.  Therefore, with any matching,
$x$ will be critical.

Before proceeding to the next case, we prove the following lemma.

\begin{lemma}
Let $x$ be a non-maximal rook placement of size $n$ in $\Gamma$ on the board $\Psi_{1,\ldots,1}$ of size $2n$.  If
position $(i,j)$ is unattacked in $x$, then $x\cup (i,j)$ is an element of $\Gamma$.  Additionally, $x$ is the only
element of $\Gamma$ properly contained in $x\cup (i,j)$.
\label{lemma_non_max_placements}
\end{lemma}
\begin{proof}
It is clear from the definition of $\Gamma$ that if $x\in \Gamma$ then $x\cup (i,j) \in \Gamma$.
Since $x$ is an element of $\Gamma$ with only $n$ rooks and Lemma~\ref{lemma_elements_of_gamma} indicates that all
of the $2n$ diagonal positions must be attacked, each rook of $x$ uniquely attacks two diagonal positions.  Assume
that $x'= x\cup (i,j) - (k,\ell)$ for some $(k,\ell) \neq (i,j)$.  Since $(i,j)$ cannot attack both positions $(k,k)$
and $(\ell, \ell)$, one of those diagonal positions is unattacked in $x'$.  Therefore, $x'$ is not an element of $\Gamma$.
\end{proof}

\noindent {\bf Case 2:} Assume that at least one of the $n$ rooks of $x$ is outside the Durfee square.  From
Corollary~\ref{cor_facet_count}, we know that the number of maximal rook placements of size $n$ in
the board $\Psi_{1,\ldots,1}$ of size $2n$ is $n!$.  Since there are $n!$ maximal rook placements of size $n$
contained in the Durfee square, $x$ cannot be maximal.  Therefore, there is a square $(i,j)$ that is not attacked.
Thus, by Lemma~\ref{lemma_non_max_placements},
$x\cup (i,j)$ is also in $\Gamma$, and $x\cup(i,j)$ covers no other element of $\Gamma$.

Define a matching of $\Gamma$ with the pairs $(x,x\cup(i,j))$ where $x$ is a rook placement from case 2.  Since
$x\cup(i,j)$ covers no other elements of $\Gamma$, the matching is clearly acyclic. This is, therefore, a Morse matching
where there are only $n!$ critical cells of dimension $n-1$.

Using Theorem~\ref{thm_maximal_cells} it is now straightforward to see the following about the topology of the Stirling complex.
\begin{theorem}
The Stirling complex $\St(n)$ is
homotopy equivalent to a CW complex with one $0$-cell and no
cells of dimension $k$ for $0<k<\lceil n/2 \rceil-1$
and for $k\geq n-1$.
Moreover, the Stirling complex $\St(2n)$ is homotopy equivalent to a wedge of
$n!$ spheres of dimension $n-1$ with a space $X$ where $X$ is $(n-1)$-connected.
\label{thm_spheres_x}
\label{thm_zero_betti}
\end{theorem}

We now discuss two corollaries.

\begin{corollary}
The Stirling complex $\St(n)$ is exactly $\left\lfloor\frac{n-3}{2}\right\rfloor$-connected.
\label{Stirling_connected}
\end{corollary}
\begin{proof}
When $n$ is even, this follows immediately from Theorem~\ref{thm_spheres_x}.  When $n$ is odd, note that the facet
corresponding to the placement
$\displaystyle{ \left\{\left(1,\frac{n+1}{2}\right),\left(2,\frac{n+1}{2}+1\right),\ldots, \left(\frac{n+1}{2},n\right)\right\}}$
is critical while its boundary is contained in a contractible sub-complex $K$, which is the union of the fibers
$\varphi^{-1}(i,i)$ of the map in Lemma~\ref{lemma_fibers}.  Therefore, $\St(n)$ is homotopy equivalent to $\St(n) / K$ which
is a sphere, $S^{\frac{n-1}{2}}$, wedged with a CW complex consisting of a $0$-cell and no other cells with dimension smaller than
$\frac{n-1}{2}$. 

\end{proof}


We note that Corollary~\ref{Stirling_connected} can also be obtained from work done by Barmak~\cite{Barmak}
by observing that the star cluster of the diagonal
contains the boundary of a facet $\sigma$ of dimension $\lfloor\frac{n-1}{2} \rfloor$, but does not contain
$\sigma$ itself.

Moreover, we obtain the following corollary from Theorem~\ref{thm_spheres_x}.

\begin{corollary}
The $(n-1)$st reduced Betti number of the Stirling complex $\St(2n)$ is $n!$.
\end{corollary}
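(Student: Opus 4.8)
The plan is to read this off directly from Theorem~\ref{thm_spheres_x}, which already furnishes the homotopy decomposition $\St(2n) \simeq (\mathbb{S}^{n-1})^{n!} \vee X$ with $X$ being $(n-1)$-connected. Since reduced Betti numbers are homotopy invariants, I only need to compute the rank of $\tilde{H}_{n-1}$ of this wedge, so the corollary is genuinely a corollary rather than a fresh argument.

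First I would use the fact that reduced homology of a wedge of pointed spaces splits as a direct sum, giving
$$
\tilde{H}_{n-1}\bigl((\mathbb{S}^{n-1})^{n!} \vee X\bigr)
\cong
\tilde{H}_{n-1}\bigl((\mathbb{S}^{n-1})^{n!}\bigr)
\oplus
\tilde{H}_{n-1}(X).
$$
Next I would identify the two summands separately. The wedge of $n!$ copies of $\mathbb{S}^{n-1}$ has $\tilde{H}_{n-1} \cong \mathbb{Z}^{n!}$, contributing a free summand of rank $n!$. For the second summand, the hypothesis that $X$ is $(n-1)$-connected means, via the Hurewicz theorem, that $X$ has trivial reduced homology in all degrees $\leq n-1$; in particular $\tilde{H}_{n-1}(X) = 0$.

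Combining these two computations yields $\tilde{H}_{n-1}(\St(2n)) \cong \mathbb{Z}^{n!}$, so the $(n-1)$st reduced Betti number is $n!$, as claimed. There is essentially no obstacle here: all the difficulty was already absorbed into proving Theorem~\ref{thm_spheres_x} (the Morse matching, the identification of the $n!$ Durfee-square facets as the only critical cells in dimension $n-1$, and the appeal to Theorem~\ref{thm_maximal_cells}). The only point requiring any care is the passage from connectivity to vanishing homology in degree $n-1$, which is the standard Hurewicz consequence and warrants at most a one-line justification.
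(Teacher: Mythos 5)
Your proposal is correct and takes essentially the same route as the paper, which simply asserts that this corollary ``follows directly'' from Theorem~\ref{thm_spheres_x}; you have merely spelled out the standard details (homology splitting over a wedge, and the Hurewicz-type vanishing $\tilde{H}_{n-1}(X)=0$ from $(n-1)$-connectivity) that the authors leave implicit. Nothing more is needed.
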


\section{Further questions}

\begin{figure}
\begin{center}
\subfigure[The order $P$ for $n=5$.]
{
\setlength{\unitlength}{6mm}
\begin{picture}(15,8)(0,0)
\put(5,0){\small{$10$}}
\put(5,2){\small{$2$}}
\put(5,4){\small{$8$}}
\put(5,6){\small{$4$}}
\put(5,8){\small{$6$}}
\put(6,0){\circle*{0.3}}
\put(6,2){\circle*{0.3}}
\put(6,4){\circle*{0.3}}
\put(6,6){\circle*{0.3}}
\put(6,8){\circle*{0.3}}

\put(10,0){\small{$7$}}
\put(10,2){\small{$5$}}
\put(10,4){\small{$9$}}
\put(10,6){\small{$3$}}
\put(9,0){\circle*{0.3}}
\put(9,2){\circle*{0.3}}
\put(9,4){\circle*{0.3}}
\put(9,6){\circle*{0.3}}

\put(6,0){\line(0,1){8}}
\put(9,0){\line(0,1){6}}
\end{picture}
\label{fig_order_p}
}
\subfigure[The order $Q'$ of the squares of a triangular board
of size five.]
{
\setlength{\unitlength}{1cm}
\begin{picture}(6,6)
\put(0,0){\line(0,1){5}}
\put(1,0){\line(0,1){5}}
\put(2,1){\line(0,1){4}}
\put(3,2){\line(0,1){3}}
\put(4,3){\line(0,1){2}}
\put(5,4){\line(0,1){1}}
\put(0,5){\line(1,0){5}}
\put(0,4){\line(1,0){5}}
\put(0,3){\line(1,0){4}}
\put(0,2){\line(1,0){3}}
\put(0,1){\line(1,0){2}}
\put(0,0){\line(1,0){1}}

\put(4.4,4.4){$1$}
\put(.4,.4){$2$}
\put(3.4,3.4){$3$}
\put(1.4,1.4){$4$}
\put(2.4,2.4){$5$}
\put(2.4,3.4){$6$}
\put(1.4,2.4){$7$}
\put(3.4,4.4){$8$}
\put(.4,1.4){$9$}
\put(2.3,4.4){$10$}
\put(.3,2.4){$11$}
\put(1.3,3.4){$12$}
\put(1.3,4.4){$13$}
\put(.3,3.4){$14$}
\put(.3,4.4){$15$}

\put(-.4,.4){1}
\put(-.4,1.4){2}
\put(-.4,2.4){3}
\put(-.4,3.4){4}
\put(-.4,4.4){5}

\put(.4,5,4){1}
\put(1.4,5,4){2}
\put(2.4,5,4){3}
\put(3.4,5,4){4}
\put(4.4,5,4){5}
\end{picture}
\label{fig_stirling_order}
}
\end{center}
\caption{}
\end{figure}

\begin{enumerate}
\item 
For a positive integer $n$, let $P$ be the following poset
on the set $\{2,3,4,\ldots,2n\}$.
The even integers have the order
\begin{eqnarray*}
 2n <_P 2(1) <_P 2(n-1) <_P 2(2) &<_P& \cdots  <_P  2(n-k) <_P 2(k+1)   \\
&<_P& 2(n-k-1) <_P \cdots <_P 2\lceil n/2 \rceil ,
\end{eqnarray*}
while the odd integers have the cover relations $k_i <_P k_{i+1}$ where
$k_{i+1} = k_i + 2i\cdot(-1)^{n+i+1}$ and
$k_1 = 2\lceil n/2 \rceil + 1$.  The evens and odds are not comparable,
see Figure~\ref{fig_order_p}.

Using $P$, we define a total order $Q'$ on the squares of the triangular board
$\Psi_{1,\ldots,1}$.
For $(i,j)\in [n]\times[n]$ with $i\leq j$, $(i,j)<_{Q'} (k,\ell)$ if
$j-i < \ell-k$ in the standard order.  If $j-i=\ell -k$ then
$(i,j)<_{Q'} (k,\ell)$ if $i+j <_P k+\ell$.

Informally, this order is
obtained by starting on the largest diagonal and alternating upper-right
to lower-left from the outside to the middle. We continue on the next
diagonal alternating from the middle to the outside. The next diagonal
moves again from the outside to the middle, etc., see
Figure~\ref{fig_stirling_order}.    

Consider the following matching, $M$,
of the rook placements on this board.  Begin by pairing placements $x \subset y$ if
the set difference $y-x$ is square one in the order $Q'$.  Of the remaining placements, match $x \subset y$
if the set difference $y-x$ is square two in the order $Q'$, and continue this process until there are no more
possible matches.

Using the program Macaulay2, we have been able to compute the reduced homology of the
Stirling complex up to $\St(8)$, see Table~\ref{tbl_stirling_betti}.  The
unmatched placements from the matching $M$ are in line with the number and dimension
of cells in the CW complex description of
these first eight complexes.  Is $M$ a Morse matching?  Is this matching maximal?  If so, is there a way
to count the critical cells?

\begin{table}
\begin{center}
\begin{tabular}{c|cccccc}
$n$ & $\widetilde{\beta}_0$ & $\widetilde{\beta}_1$
  & $\widetilde{\beta}_2$ & $\widetilde{\beta}_3$
  & $\widetilde{\beta}_4$ & $\widetilde{\beta}_5$\\
\hline
1 & 0\\
2 & 1\\
3 & 0 & 1\\
4 & 0 & 2\\
5 & 0 & 0 & 9\\
6 & 0 & 0 & 6 & 15\\
7 & 0 & 0 & 0 & 58 & 8\\
8 & 0 & 0 & 0 & 24 & 292 & 1\\
\end{tabular}
\end{center}
\caption{The reduced Betti numbers of the Stirling complex through $\St(8)$.}
\label{tbl_stirling_betti}
\end{table}

\item Can anything more be said about the homotopy type of $\St(n)$?  In particular,
what is the complex $X$ from Theorem~\ref{thm_spheres_x}?  The numerical computations
of homology died at $\St(9)$. Coincidentally, this is the first instance where
the Durfee square of the board has size $5$.  Shareshian and Wachs~\cite{Shar_Wach}
showed that this was the first square board whose chessboard complex contained
torsion in its bottom non-vanishing homology.  Is there torsion in $\St(9)$?

\end{enumerate}

\section*{Acknowledgements}
The authors would like to thank Benjamin Braun and Richard Ehrenborg
for reading earlier versions of this paper as well as David Cook
for his help with Macaulay2.
The authors would also like to thank the anonymous referees for many helpful comments, suggestions,
and insights, particularly those relating to the work of Barmak and the proofs of
Theorem~\ref{thm_spheres_x} and Corollary~\ref{Stirling_connected}.
The first author was partially funded by a graduate fellowship from
National Science Foundation grant DMS-0902063.
The second author was partially funded by a graduate fellowship from
National Science Foundation grant DMS-0758321.
\newcommand{\journal}[6]{{\sc #1,} #2, {\it #3} {\bf #4} (#5), #6.}
\newcommand{\bk}[4]{\textsc{#1,} ``#2,'' #3, #4.}
\newcommand{\preprint}[3]{{\sc #1,} #2, preprint #3.}

\bigskip

\noindent
{\em E.\ Clark,
Department of Mathematics,
Western New England University,
Springfield, MA 01119,}
\{{\tt eric.clark@wne.edu}\}

\bigskip

\noindent
{\em M.\ Zeckner,
Department of Mathematics,
Adrian College,
Adrian, MI 49221,}
\{{\tt mzeckner@adrian.edu}\}

\end{document}